\documentclass[reqno,10pt,a4paper]{article}
\usepackage{amsmath}
\usepackage{amsthm}
\usepackage{amsfonts}
\usepackage{amssymb}
\usepackage{indentfirst}
\usepackage{marginnote}
\usepackage{xcolor}
\usepackage[left=3cm,top=3cm,right=3cm,bottom=3cm]{geometry}
\usepackage{color}
\usepackage[colorlinks=true, linkcolor=black, citecolor=black]{hyperref}

\newtheorem{thm}{Theorem}[section]
\newtheorem{lem}[thm]{Lemma}
\newtheorem{co}[thm]{Corollary}

\newtheorem{re}[thm]{Remark}
\numberwithin{equation}{section}

\begin{document}
\title{Exact convergence rate in the central limit theorem  for a branching process with immigration in a random environment\\
}
\author{Chunmao Huang$^{a}$\footnote{Corresponding author at: Harbin Institute of Technology (Weihai), Department of Mathematics, 264209, Weihai, China.  \newline \indent \ \ Email addresses: cmhuang@hitwh.edu.cn  (Chunmao Huang).}, \;\;  Rui Zhang$^{a}$, \;\;  Zhiqiang Gao$^{b}$
\\
\small{\emph{$^{a}$Harbin Institute of Technology (Weihai), Department of Mathematics, 264209, Weihai, China}}\\
\small{\emph{$^{b}$Beijing Normal University, School of Mathematical Sciences, 100875, Beijing, China}}}
%\date{}
\maketitle

\begin{abstract}

\bigskip
 Let ($Z_n$) be a branching process with immigration in an independent and identically distributed random environment. Under necessary moment conditions, we show the exact convergence rate in the central limit theorem on $\log Z_n$ by using the convergence rates of the logarithm  of submartingale and  the   result of the corresponding random walk on the Berry-Esseen bound.\\*

\emph{AMS 2010 subject classifications.}  60J80, 60K37, 60F05.

\emph{Key words:} branching process with immigration; random environment; central limit theorem

\end{abstract}
%%%%%%%%%%%%%%%%%%%%%%%%%%%%%%%%%  Section 1 %%%%%%%%%%%%%%%%%%%%%%%%%%%%%%%%%%%
\section{Introduction}\label{LTS1}

Branching process in random environment (BPRE) is a basic and important branching system which characterizes the influence of external environment on branching behaviour. This model was first introduced by Smith and Wilkinson \cite{C1} for the independent and identically distributed environment case, and then generalized by  Athreya and Karlin \cite{C2} to the stationary and environment case. There were a lot of achievements on this topic in the literature; see for example \cite{B1,B2, glm17,L1, hl12, huang14, B3}. As an important  extension of BPRE, the model of   branching process with immigration (BPIRE) was proposed by adding the immigration on the basis of the branching process. In this model, based on the original branching mechanism, new immigrants come and join the family in each generation, and both reproduction and immigration distribution are determined by an environment varying with the time.
This means that the population size at every generation is influenced by both the environment and immigration populations.  In recent years,  many  results were obtained for BPIRE, including moments, limit theorems, convergence rates and so on; see for example \cite{ A2,D2,D1,wl17,T4}. In this paper, we are interested in the central limit theorem associated to the process. On this subject, for BPRE (without immigration), Huang and Liu \cite{hl12} showed a  central limit theorem on the logarithm of population size, Grama \emph{et al.} \cite{glm17} obtained the corresponding Berry-Esseen's bound,  and   Gao \cite{L1} presented the exact convergence rate; for BPIRE,  corresponding central limit theorem and  Berry-Esseen's bound were shown in Wang and Liu \cite{wl17, T4}. Based on their research, this paper aims to explore the exact convergence rate in central limit theorem for BPIRE and to reveal the influence of the immigration on it.

In this paper, we focus on a single-type branching process with immigration in a random  environment, which is defined by:
$$Z_{0}=1,\quad Z_{n+1}=Y_{n}+\sum_{i=1}^{Z_{n}}X_{n,i},n=0,1,2,\cdots, $$
where $Y_n$ is the number of new immigrants in the $n$-th generation, and $X_{n,i}$ is the number of offspring of the $i$-th individual in the $n$-th generation. The random environment, denoted by $\xi=(\xi_0,\xi_1,\xi_2,\cdots)$, is a sequence of independent and identically distributed (i.i.d.) random variables  taking values in abstract space $\Theta$. Each variable $\xi_n$ corresponds to two probability distribution on $\mathbb{N}=\{0,1,2,\cdots\}$, one is the offspring distribution denoted by
\begin{displaymath}
p(\xi_n)=\{p_k(\xi_n);k\geq0\},\quad \text{where $p_{k}(\xi_{n})\geq0$  and $\sum_{k}p_k(\xi_n)=1$;}
\end{displaymath}
the other is the immigration distribution denoted by
\begin{displaymath}
h(\xi_n)=\{h_k(\xi_n);k\geq0\},\quad \text{where $h_{k}(\xi_{n})\geq0$  and $\sum_{k}h_k(\xi_n)=1$.}
\end{displaymath}
Given the environment $\xi$, we suppose that  $X_{n,i}(n=0,1,2,\cdots,i=1,2,\cdots)$ and $Y_{n}(n\geq0)$ are all independent of each other, $X_{n,i}$ have the distribution $p(\xi_n)$ and $Y_{n}$ has the distribution ${h}(\xi_{n})$. In particular, if $Y_n=0$ for all $n$, there is no immigration and the process is a branching process in a random environment, denoted by $(\bar Z_n)$, which is defined by:
$$\bar Z_0=1,\qquad \bar Z_n=\sum_{i=1}^{\bar Z_{n}}X_{n,i}, n=0,1,2,\cdots $$

Let $(\Gamma,\mathbb{P}_{\xi})$ be the probability space under the given environment $\xi$. The probability $\mathbb{P}_{\xi}$ is the so-called quenched law. The total probability space
can be formulated as the product space $(\Gamma\times\Theta,\mathbb{P})$, where $\mathbb{P}(dx,d\xi)=\mathbb{P}_{\xi}(dx)\tau(d\xi)$ in the sense that for all measurable and positive function $g$, we have
$$\int g(x,\xi)\mathbb{P}(dx,d\xi)=\int\int g(x,\xi)\mathbb{P}_{\xi}(dx)\tau(d\xi)$$
where $\tau$ is the law of the environment $\xi$. The total probability $\mathbb{P}$ is usually called annealed law. The quenched law $\mathbb{P}_{\xi}$ may be considered to be the conditional probability of the annealed law $\mathbb P$ given $\xi$.

We write $X_n=X_{n,1}$ for brevity. For $n\in \mathbb N$, set
$$m_{n}=\sum_{k=0}^{\infty}k p_{k}(\xi_{n})=\mathbb E_\xi X_{n}.$$
Throughout the paper, we always assume that
\begin{equation}\label{ea0}
\mathbb P(X_0=0)=0\qquad\text{and}\qquad \mathbb P(X_0=1)<1,
\end{equation}
which means that each individual produces at least one child, and the probability of producing at least two children is positive. This assumption implies that $\mathbb P(m_0>1)=1$, so that $\mathbb E\log m_0>0$,
which means that the corresponding BPRE is supercritical. Therefore, the total population $Z_n$ tends to infinity almost surely (a.s.).  In order to investigate the asymptotic properties of  $Z_n$, we introduce the natural submartingale  and  martingale  of the model. For $n\in \mathbb N$, set
\begin{equation*}
\Pi_{0}=1,\quad \quad \Pi_{n}=\prod_{k=0}^{n-1}m_{k}, \;\;n=1,2,\cdots,
\end{equation*}
$$
W_{n}=\frac{Z_{n}}{\Pi_{n}}\qquad\text{and}\qquad
\bar W_{n}=\frac{\bar Z_{n}}{\Pi_{n}}.
$$
It is known that  $W_n$ forms a nonnegative submartingale, and it  converges almost surely (a.s.) to some limit $W$ if $\mathbb E \log m_0>0$ and $\mathbb E\log^+\frac{Y_0}{m_0}<\infty$ by \cite[Theorem 3.2]{wl17}, while $\bar W_n$ is a nonnegative martingale
 and hence it  converges a.s. to a limit $\bar W$ without any condition.

Among the limit properties associated to the process $Z_n$, a central limit theorem on $\log Z_n$  was established by Wang and Liu \cite[Theorem 7.1]{wl17}: if $\sigma^2=\mathbb E(\log m_0-\mu)^2\in(0,\infty)$, $\mathbb E\log^+\frac{Y_0}{m_0}<\infty$ and $\mathbb E\frac{X_0}{m_0}\log^+X_0<\infty$, then for $x\in\mathbb R$,
$$\lim_{n\to\infty}\mathbb P\left(\frac{\log Z_n -n\mu}{\sqrt{n}\sigma }\leq x\right)=\Phi(x), $$
where $\mu=\mathbb E\log m_0$ and $\Phi(x)$ is the standard normal distribution function. After that,  Wang and Liu  \cite[Theorem 1.1]{T4} further obtained the corresponding Berry-Essen bound which can describe the convergence rate of central limit theorem  under stronger moment conditions: if $\mathbb E(\log m_0)^{2+\delta}<\infty$ for some $\delta>0$, $\sigma>0$, $\mathbb E(\frac{Y_0}{m_0})^p<\infty$ and $\mathbb E(\frac{X_0}{m_0})^p<\infty$ for some $p>1$, then
$$\sup_{x\in\mathbb R}\left|\mathbb P\left(\frac{\log Z_n -n\mu}{\sqrt{n}\sigma }\leq x\right)-\Phi(x)\right|=O(n^{-\delta/2})\quad (n\to\infty).$$
These results generalized the results for BPRE showed in Huang and Liu \cite[Theorem 1.7]{hl12} and Grama \emph{et al.} \cite[Theorem 1.1]{glm17}. Recently, Gao \cite{L1} discovered the exact convergence rate of the central limit theorem for BPRE in the case that $\log m_0$ is a.s. non-lattice:  under certain moment conditions,
$$\mathbb P\left(\frac{\log Z_n -n\mu}{\sqrt{n}\sigma }\leq x\right)-\Phi(x)\sim \bar g(x)n^{-1/2}\quad (n\to\infty)$$
with a function $\bar g(x)$ that can be exactly represented. We will see that a similar result will also hold for BPIRE.

\begin{thm}\label{tt1}
 Assume that $\log m_0$ is a.s. non-lattice and $\mathbb E (\log m_0)^r<\infty$ for some $r\geq 3$. Set $\mu=\mathbb E\log m_0$, $\sigma^2=\mathbb E(\log m_0-\mu)^2$ and $\mu_3=\mathbb E(\log m_0-\mu)^3$. If $\sigma>0$, $\mathbb E(\frac{Y_0}{m_0})^\delta<\infty$ and $\mathbb E(\mathbb E_\xi (\frac{X_0}{m_0})^p)^\delta<\infty$ for some $p>1$ and  $\delta>0$,
then, for $x\in\mathbb R$,
\begin{equation}\label{te3}
\lim_{n\rightarrow\infty}\sqrt{n}\left[\mathbb P\left(\frac{\log Z_n -n\mu}{\sqrt{n}\sigma }\leq x\right)-\Phi(x)\right]=-\frac{1}{\sigma}\varphi(x)\mathbb E\log W+Q(x),
\end{equation}
where $\Phi(x)=\frac{1}{\sqrt 2\pi}\int_{-\infty}^x e^{-t^2/2}\mathrm{d}t$ is the standard normal distribution function, $\varphi(x)=\frac{1}{\sqrt 2\pi} e^{-x^2/2}$ is the density function of the standard normal distribution, and $Q(x)=\frac{1}{6\sigma^3}\mu_3(1-x^2)\varphi(x)$.
\end{thm}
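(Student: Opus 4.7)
My plan starts from the decomposition
$$\log Z_n - n\mu \;=\; S_n \,+\, \log W_n,\qquad S_n:=\sum_{k=0}^{n-1}(\log m_k-\mu),$$
which turns the target \eqref{te3} into an asymptotic expansion for the distribution of $S_n+\log W_n$. The summands of $S_n$ are centred i.i.d.\ with non-lattice distribution and finite third moment, so the classical Edgeworth expansion yields, uniformly in $y$,
$$\mathbb P\!\left(S_n/(\sqrt n\,\sigma)\le y\right)=\Phi(y)+Q(y)/\sqrt n+o(n^{-1/2}).$$
Combined with the heuristic first-order Taylor expansion
$\mathbb E\,\Phi\!\bigl(x-\log W/(\sqrt n\sigma)\bigr)=\Phi(x)-\varphi(x)\mathbb E\log W/(\sqrt n\,\sigma)+o(n^{-1/2})$,
this already predicts the announced limit $-\sigma^{-1}\varphi(x)\mathbb E\log W+Q(x)$.

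To turn this into a proof I would proceed in three stages. First, verify that $\mathbb E|\log W|<\infty$: $\mathbb E\log^+W$ is controlled via the $L^p$-boundedness of the submartingale $W_n$ that follows under the assumed immigration and offspring moment hypotheses through the Wang--Liu convergence theorem, while $\mathbb E\log^-W$ is controlled through the comparison $Z_n\ge\bar Z_n\ge 1$ a.s.\ (a consequence of $\mathbb P(X_0=0)=0$) together with integrability properties of the BPRE martingale limit $\bar W$. Second, replace $\log W_n$ by $\log W$ inside the probability at a cost of $o(n^{-1/2})$; this combines an $L^1$-rate $\mathbb E|\log W_n-\log W|=o(n^{-1/2})$, obtained from the $L^p$-convergence of $W_n$ and a log-truncation, with the boundedness of the density of $S_n/(\sqrt n\,\sigma)$ on compact sets, supplied by the non-lattice local limit theorem. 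Third, remove the dependence between $S_n$ and $\log W$ by truncating at a generation $k=k_n$ with $k_n\to\infty$ and $k_n/n\to 0$: using the branching-with-immigration recursion, write $\log W=\log W^{(k)}+r_{k,n}$, where $W^{(k)}$ depends only on $(\xi_j,X_{j,i},Y_j)_{j\ge k}$, and is therefore asymptotically independent of $S_n$, and where $\mathbb E|r_{k,n}|$ tends to zero fast enough; a conditional Edgeworth expansion of $S_n$ given the post-$k$ data then delivers the claimed limit.

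The main obstacle is stage three: quantifying the residue $r_{k,n}$ at the order $o(n^{-1/2})$ requires a careful decomposition of $W$ along the branching tree while accounting for the fresh random contribution $Y_{n-1}$ that immigration injects at every generation. In contrast to the BPRE setting treated by Gao \cite{L1}, this extra i.i.d.\ immigration sequence prevents a direct reduction to the martingale case; nevertheless, the moment hypothesis $\mathbb E(Y_0/m_0)^\delta<\infty$ supplies summable control of immigration effects on $W_n$, and converting this into the precise logarithmic rate needed for the decorrelation step is the technical crux of the argument.
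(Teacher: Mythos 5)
Your opening decomposition $\log Z_n=S_n+\log W_n$ and the two ingredients you identify---an Edgeworth expansion for the i.i.d.\ walk $S_n$ and a first-order Taylor expansion of $\Phi$ to produce the $-\sigma^{-1}\varphi(x)\mathbb E\log W$ term---are exactly the ones the paper uses. Stage~1 (finiteness of $\mathbb E|\log W|$) and the spirit of Stage~2 (replacing $\log W_n$ by a fixed quantity at cost $o(n^{-1/2})$, using an exponential $L^1$ rate for $\log W_n$) also match the paper's Theorems~2.3 and~3.3(b). However, the decorrelation in Stage~3 is mis-specified in a way that would break the argument.

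You propose to write $\log W=\log W^{(k)}+r_{k,n}$ with $W^{(k)}$ depending only on $(\xi_j,X_{j,i},Y_j)_{j\ge k}$ and then assert that $W^{(k)}$ is ``asymptotically independent of $S_n$.'' This is false: $S_n=\sum_{j<n}(\log m_j-\mu)$ involves $\xi_j$ for every $j<n$, and in particular for all $j\in[k,n)$, so a random variable built from the post-$k$ environment is correlated with $S_n$, not independent of it. Likewise, ``a conditional Edgeworth expansion of $S_n$ given the post-$k$ data'' is vacuous, since conditionally on $(\xi_j)_{j\ge k}$ the increment $S_n-S_k$ is deterministic. The independence you need runs the other way: $\log Z_{k_n}$ is measurable with respect to the \emph{first} $k_n$ generations, while $S_n-S_{k_n}$ depends only on $(\xi_{k_n},\dots,\xi_{n-1})$, and these two are genuinely independent. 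The paper exploits exactly this by writing
\begin{equation*}
\log Z_n=\log Z_{k_n}+\bigl(S_n-S_{k_n}\bigr)+\bigl(\log W_n-\log W_{k_n}\bigr),
\end{equation*}
conditioning on $\log Z_{k_n}=y$, applying Lemma~\ref{s4l1}(b) to $S_n-S_{k_n}$, and Taylor-expanding $\Phi$ and $Q$ in the $y$-variable; the correction $\log W_n-\log W_{k_n}$ is killed by Markov's inequality together with the exponential rate of Theorem~\ref{tt3}(b). Your Stage~2 also leans on ``boundedness of the density of $S_n/(\sqrt n\sigma)$,'' but $\log m_0$ being non-lattice does \emph{not} give $S_n$ a density, let alone a uniformly bounded one; the paper sidesteps this by the $\pm\alpha_n$ sandwich inequality in \eqref{eo} rather than invoking any local density control. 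Finally, your Stage~1 appeals to ``$L^p$-boundedness of the submartingale $W_n$,'' but under the stated hypothesis only $\mathbb E(Y_0/m_0)^\delta<\infty$ for some possibly small $\delta\in(0,1)$ is available, so $W_n$ need not be $L^1$-bounded; the correct control is $\sup_n\mathbb EW_n^\delta<\infty$, which suffices for $\mathbb E\log^+W$.
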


Theorem \ref{tt1} reveals the exact convergence rate of the central limit theorem for BPIRE. Comparing it with the corresponding Berry-Essen bound shown by Wang and Liu \cite[Theorem 1.1]{T4}, we see that under ideal moment conditions,  it is possible to reach the maximum convergence rate $g(x)n^{-1/2}$ for central limit theorem, where $g(x)=-\frac{1}{\sigma}\varphi(x)\mathbb E\log W+Q(x)$. Moreover, it can also be seen that the moment conditions on $Y_0$ and $X_0$ in Theorem \ref{tt1} are weaker than those in \cite[Theorem 1.1]{T4} whenever $\mathbb E (\log m_0)^r<\infty$ for some $r\geq 3$, which means that Theorem \ref{tt1} is improved on the moment condition compared with \cite[Theorem 1.1]{T4}.  When there is no immigration,  the result of Theorem \ref{tt1} coincides with that obtained by Gao \cite{L1} for BPRE.

\medskip
The rest part of this paper is organised as follows. In Section \ref{s2}, we study  moments of $\log W_n$. Based on those moment results,    we  work on   convergence rates of $\log W_n$ in  Section  \ref{s3}. Finally,  Section  \ref{s4} is devoted to the proof of Theorem \ref{tt1}.

\section{Moments of $\log W_n$}\label{s2}
In this section, we will study the existence of the moments $\sup_n\mathbb E|\log W_n|^r$ for $r>0$. Noticing that $W_n\geq \bar W_n$, we will   consider the moments of $\bar W_n$, whose existence is related to the decay rates of the Laplace transform of the limit $\bar W$.
We   first  show a lemma which infers that a function $\phi(t)$ will be the bounded quantity of $(\log t)^{-r}$ when it meets appropriate conditions.
\begin{lem}\label{le0}
Let $\phi(t)$ be a bounded function satisfying
\begin{equation}\label{l1e1}
\phi(t)\leq q \mathbb E\phi(Gt)+\mathbb P(t<V)\qquad (\forall t\geq 0),
\end{equation}\label{l1e2}
where $q>0$ is a constant, and $G, V$ are non-negative random variables. If $q<1$, $\mathbb E|\log G|^r<\infty$ and $\mathbb E|\log V|^r<\infty$ for some $r>0$, then $\phi(t)=O((\log t)^{-r})$.
\end{lem}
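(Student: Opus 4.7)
The plan is to iterate the functional inequality (\ref{l1e1}) to reduce the bound on $\phi(t)$ to a convergent series of tail probabilities, then estimate each tail by Markov's inequality using the moment hypotheses.

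Introduce mutually independent sequences $(G_i)_{i\ge 1}$ and $(V_i)_{i\ge 1}$ of i.i.d.\ copies of $G$ and $V$. Applying (\ref{l1e1}) first at $t$, then substituting (\ref{l1e1}) at the argument $G_1 t$ and taking expectation, and continuing, I would obtain by induction for every $n\ge 1$ and $t\ge 0$
\begin{equation*}
\phi(t)\le q^n\,\mathbb{E}\,\phi(G_1\cdots G_n\,t)+\sum_{k=0}^{n-1}q^k\,\mathbb{P}(G_1\cdots G_k\,t<V_{k+1}),
\end{equation*}
with the convention $G_1\cdots G_0=1$. Since $\phi$ is bounded and $q<1$, the first term tends to $0$ as $n\to\infty$, so setting $S_k:=\sum_{i=1}^k \log G_i$ (with $S_0=0$),
\begin{equation*}
\phi(t)\le \sum_{k=0}^{\infty}q^k\,\mathbb{P}\bigl(S_k+\log t<\log V_{k+1}\bigr).
\end{equation*}

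For $t>1$, Markov's inequality gives
\begin{equation*}
\mathbb{P}\bigl(\log V_{k+1}-S_k>\log t\bigr)\le \frac{\mathbb{E}|\log V_{k+1}-S_k|^r}{(\log t)^r}.
\end{equation*}
The $c_r$-inequality, together with $\mathbb{E}|\log V|^r<\infty$ and $\mathbb{E}|\log G|^r<\infty$, yields a polynomial bound $\mathbb{E}|\log V_{k+1}-S_k|^r\le C_r(1+k^{\max(r,1)})$: for $r\ge 1$ use convexity of $x\mapsto |x|^r$ to obtain $\mathbb{E}|S_k|^r\le k^{r}\,\mathbb{E}|\log G|^r$, while for $0<r<1$ the subadditivity $|a+b|^r\le |a|^r+|b|^r$ gives $\mathbb{E}|S_k|^r\le k\,\mathbb{E}|\log G|^r$. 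Because $q<1$, the series $\sum_{k\ge 0}q^k(1+k^{\max(r,1)})$ converges, and combining everything
\begin{equation*}
\phi(t)\le \frac{C}{(\log t)^r}\qquad (t>1)
\end{equation*}
for some constant $C$, which is exactly $\phi(t)=O((\log t)^{-r})$.

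The main obstacle is mostly bookkeeping rather than analytic: one must set up the iteration with genuinely fresh, independent copies of $G$ and $V$ at each step (so that the substitution of (\ref{l1e1}) into $\mathbb{E}\phi(G_1\cdots G_k t)$ is legitimate and yields independent terms), and one must treat the moment estimate on $|S_k|^r$ separately in the regimes $r\ge 1$ and $0<r<1$. Beyond these points the argument is a direct Markov-plus-geometric-series estimate.
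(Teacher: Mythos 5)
Your proposal is correct and follows essentially the same route as the paper: iterate \eqref{l1e1} with independent copies of $G$, bound the leading term by boundedness of $\phi$, apply Markov's inequality to each tail probability and estimate $\mathbb{E}|S_k-\log V|^r$ via the $c_r$/convexity inequality in the two regimes $r\ge 1$ and $0<r<1$, then sum the geometric series. The only cosmetic difference is at the final step: you pass to the limit $n\to\infty$ to drop the term $q^n\mathbb{E}\phi(\cdots)$ entirely, whereas the paper keeps a finite number of iterations and chooses $l=[-r\log\log t/\log q]$ so that $q^l\le(\log t)^{-r}$; both give the stated bound.
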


\begin{proof}
We assume that   $|\phi(t)|$  is bounded by the constant $M>0$.
 Let $(G_n)_{n\geq 1}$ be the sequence of independent copies of $G$.
Using \eqref{l1e2} by iteration,  we can get that, for any  integer $l\geq 1$,
\begin{eqnarray}\label{l1e3}
\phi(t)&\leq &q^l\mathbb E\phi(G_l\cdots G_1 t)+\sum_{k=0}^{l-1}q^k\mathbb P(G_k\cdots G_1 G_0 t<V)\nonumber\\
&\leq & M q^l+\sum_{k=0}^{l-1}q^k\mathbb P(G_k\cdots G_1 G_0 t<V),
\end{eqnarray}
where we regard as $G_0=1$ by convention.
Since $\mathbb E|\log G|^r<\infty$ and $\mathbb E|\log V|^r<\infty$, by Markov's inequality, for $t>1$ and $k\geq 0$,
\begin{eqnarray}\label{l1e4}
\mathbb P(G_k\cdots G_1 G_0 t<V)&=& \mathbb P\left (\log t\leq -\sum_{j=1}^k \log G_j+\log V\right)\nonumber\\
&\leq & (\log t)^{-r}\mathbb E\left|-\sum_{j=1}^k \log G_j+\log V\right|^r\nonumber\\
&\leq & C (\log t)^{-r}\left[k^{\max\{r,1\}}\mathbb E|\log G|^r+\mathbb E|\log V|^r\right]\nonumber\\
&\leq& C (\log t)^{-r} (k+1)^{\max\{r,1\}},
\end{eqnarray}
where $C>0$ stands for a general constant, and it may differ from line to line. Since $q\in(0,1)$, we have $\sum_{k=0}^{\infty}q^k  (k+1)^{\max\{r,1\}}<\infty$.
Combining \eqref{l1e3} and \eqref{l1e4} yields that for $t>1$ and $l\geq 1$,
\begin{equation}\label{l1e5}
\phi(t)
%%\leq M q^l+C (\log t)^{-r}\sum_{k=0}^{l-1}q^k  (k+1)^{\max\{r,1\}}
\leq  M q^l+C (\log t)^{-r}\sum_{k=0}^{\infty}q^k  (k+1)^{\max\{r,1\}}= M q^l+C (\log t)^{-r}.
\end{equation}
Take $l=[-\frac{r\log\log t}{\log q}]$. Then $l\geq 1$ for $t$ large enough, and   $q^l\leq (\log t)^{-r}$.
So \eqref{l1e5} reveals that $\phi(t)=O((\log t)^{-r})$.
\end{proof}

Next, we consider $\mathbb E e^{-t\bar W}$ $(t>0)$, the Laplace transform of $\bar W$. With the help of Lemma \ref{le0}, we will see that it would be  the bounded quantity of $(\log t)^{-r}$ under appropriate moment conditions. This result is necessary in the proof of Theorem \ref{tt2}.

\begin{lem}\label{le1}
Let $\phi(t)=\mathbb E e^{-t\bar W}$ $(t>0)$ be the Laplace transform of $\bar W$.
If $\mathbb E(\log m_0)^{a}<\infty$ and $\mathbb E(\mathbb E_\xi (\frac{X_0}{m_0})^p)^\varepsilon<\infty$ for some $a>0$, $p>1$ and $\varepsilon>0$, then for $r\in (0,a)$,
$$\phi(t)=O((\log t)^{-r}).$$
\end{lem}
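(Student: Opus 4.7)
The plan is to verify the hypotheses of Lemma~\ref{le0} for the function $\phi(t)=\mathbb E e^{-t\bar W}$ itself. The starting point is the first-generation branching identity $\bar W=m_0^{-1}\sum_{i=1}^{X_0}\bar W^{(i)}$, where the $\bar W^{(i)}$ are, conditionally on $\xi$, i.i.d.\ copies of $\bar W$ in the shifted environment $T\xi=(\xi_1,\xi_2,\ldots)$. Taking quenched Laplace transforms gives
\[\phi_\xi(t)=f_{\xi_0}\bigl(\phi_{T\xi}(t/m_0)\bigr),\qquad f_{\xi_0}(s)=\mathbb E_\xi s^{X_0}.\]
Because $p_0(\xi_0)=0$ a.s.\ by \eqref{ea0}, the bound $s^k\le s^2$ for $k\ge 2$ and $s\in[0,1]$ yields $f_{\xi_0}(s)\le p_1(\xi_0)s+q_{\xi_0}s^2$ with $q_{\xi_0}=\mathbb P_\xi(X_0\ge 2)$, and $\mathbb E q_{\xi_0}=\mathbb P(X_0\ge 2)>0$. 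Inserting the further elementary inequality $s^2\le s_0 s+\mathbf 1_{\{s>s_0\}}$ for a fixed threshold $s_0\in(0,1)$ gives
\[\phi_\xi(t)\le\bigl(1-(1-s_0)q_{\xi_0}\bigr)\phi_{T\xi}(t/m_0)+\mathbf 1_{\{\phi_{T\xi}(t/m_0)>s_0\}}.\]

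Averaging in $\xi$ and using that $T\xi$ is independent of $\xi_0$ (so $\mathbb E[\phi_{T\xi}(s)\mid\xi_0]=\phi(s)$ whenever $s$ is measurable in $\xi_0$), the first term rewrites as $q\,\mathbb E\phi(Gt)$, where $q=1-(1-s_0)\mathbb E q_{\xi_0}<1$ and $G=1/m_0$ is drawn from the tilted law with density $(1-(1-s_0)q_{\xi_0})/q$ with respect to $\tau$. Since $\phi_\xi$ is continuous and strictly decreasing from $1$ at $0$ to $0$ at $\infty$ (the latter since $\bar W>0$ a.s.\ under $\mathbb P_\xi$), set $h(\xi):=\phi_\xi^{-1}(s_0)$ and $V:=m_0 h(T\xi)$; then $\{\phi_{T\xi}(t/m_0)>s_0\}=\{t<V\}$, and I arrive at exactly the hypothesis of Lemma~\ref{le0},
\[\phi(t)\le q\,\mathbb E\phi(Gt)+\mathbb P(t<V).\]

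It remains to verify that $\mathbb E|\log G|^r,\mathbb E|\log V|^r<\infty$ for every $r\in(0,a)$. The first is immediate: $m_0\ge 1$ a.s., so $|\log G|=\log m_0$, the tilt density is bounded by $1/q$, and the hypothesis $\mathbb E(\log m_0)^a<\infty$ gives $\mathbb E|\log G|^r\le q^{-1}\mathbb E(\log m_0)^r<\infty$. For $V$, by stationarity the problem reduces to $\mathbb E|\log h(\xi)|^r<\infty$. Convexity of $\phi_\xi$ (with $\phi_\xi'(0)=-1$) yields $\phi_\xi(s)\ge 1-s$, hence $h(\xi)\ge 1-s_0$ deterministically, so the lower tail is trivial and only the upper tail of $h(\xi)$ needs work. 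This is where the hypothesis $\mathbb E(\mathbb E_\xi(X_0/m_0)^p)^\varepsilon<\infty$ becomes indispensable: via standard $L^p$-boundedness of the martingale $(\bar W_n)$ it delivers the annealed integrability $\mathbb E(\mathbb E_\xi\bar W^p)^\varepsilon<\infty$. A quenched Paley--Zygmund estimate $\mathbb P_\xi(\bar W>1/2)\ge c(\mathbb E_\xi\bar W^p)^{-1/(p-1)}$, combined with $1-\phi_\xi(s)\ge(1-e^{-s/2})\mathbb P_\xi(\bar W>1/2)$, then produces an upper bound of the form $\log h(\xi)\le C+(p-1)^{-1}\log\mathbb E_\xi\bar W^p$; since $\mathbb E_\xi\bar W^p\ge 1$ and $(\log x)^r\le C_{\varepsilon,r}\,x^\varepsilon$ for $x\ge 1$, finiteness of $\mathbb E(\mathbb E_\xi\bar W^p)^\varepsilon$ delivers $\mathbb E(\log h(\xi))^r<\infty$, and Lemma~\ref{le0} then yields $\phi(t)=O((\log t)^{-r})$. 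The main obstacle I anticipate is precisely this final step: handling the regime where $\mathbb E_\xi\bar W^p$ is very large—on which the direct Paley--Zygmund control breaks—possibly by iterating the one-step recursion a few generations first so as to gain enough contraction uniformly in $\xi$ before extracting the logarithmic moment.
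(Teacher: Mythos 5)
Your one-step functional inequality is a genuinely different (and cleaner) route to the contraction constant: $s^k\le s^2$ for $k\ge 2$ followed by $s^2\le s_0 s+\mathbf 1_{\{s>s_0\}}$ gives $q=1-(1-s_0)\mathbb P(X_0\ge 2)<1$ directly from \eqref{ea0}, without any iteration. The obstacle you flag at the end, however, is a genuine gap, and the fix you suggest (iterating a few generations first to gain contraction) does not address it, because the problem is not with $q$ — that is already $<1$ — but with the tail of $V=m_0 h(T\xi)$. The Paley--Zygmund estimate $\mathbb P_\xi(\bar W>\theta)\ge(1-\theta)^{p/(p-1)}(\mathbb E_\xi\bar W^p)^{-1/(p-1)}$ is useful only when $\mathbb E_\xi\bar W^p$ is moderate; when it is large the right-hand side is small for every threshold $\theta$, so the chain $1-\phi_\xi(s)\ge(1-e^{-s\theta})\mathbb P_\xi(\bar W>\theta)$ can never reach $1-s_0$, and the claimed bound $\log h(\xi)\le C+(p-1)^{-1}\log\mathbb E_\xi\bar W^p$ does not follow for those $\xi$. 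The root issue is that $h(\xi)=\phi_\xi^{-1}(s_0)$ is governed by the \emph{left} tail of $\bar W$ under $\mathbb P_\xi$, while the hypothesis $\mathbb E(\mathbb E_\xi(X_0/m_0)^p)^\varepsilon<\infty$ only controls the right tail; an $L^p$ moment alone simply does not pin down how slowly $\phi_\xi$ decays.

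The paper sidesteps the quenched inverse $h(\xi)$ altogether. It iterates the quenched recursion $n$ steps and then splits three ways: on the good event $\{\mathbb E_{T^n\xi}\bar W^p\le K\}\cap\{t\ge t_K\Pi_n\}$, the elementary bound $e^{-x}\le 1-x+C_p x^p$ gives $\phi_{T^n\xi}(t/\Pi_n)\le\beta_K<1$ with an explicit $\beta_K$; the complementary environment event $\{\mathbb E_{T^n\xi}\bar W^p>K\}$ is absorbed into the constant $q$ via Markov's inequality and the $\varepsilon$-moment of $\mathbb E_\xi\bar W^p$ (obtained by Burkholder from the hypothesis), costing only $K^{-\varepsilon}(\cdot)$; and what remains is the deterministic threshold $V=t_K\Pi_n$, whose logarithmic moments follow immediately from $\mathbb E(\log m_0)^a<\infty$. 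Taking both $n$ and $K$ large is what makes the resulting $q<1$. If you want to salvage your approach, the correct fix is to replace the bound on $h(\xi)$ by this split — set $s_0=\beta_K$, note that $\{\phi_{T\xi}(t/m_0)>\beta_K\}\subseteq\{\mathbb E_{T\xi}\bar W^p>K\}\cup\{t<t_K m_0\}$, keep the factor $\phi_{T\xi}(t/m_0)$ on the first subevent and use Burkholder as in the paper, and take $V=t_K m_0$ — and then be prepared to iterate if $\varepsilon$ is small enough that the single-step $q$ fails to be $<1$.
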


\begin{proof}The proof idea is inspired by \cite{hl12, GLM22+} by working on the quenched Laplace transform of $\bar W$ and constructing the recursive relationship.
We think  that $p\in(1,2]$, otherwise we use $\min\{p,2\}$ to  replace $p$. Set $\phi_\xi(t)=\mathbb E_\xi e^{-t \bar W}$.
Notice that the function $\frac{e^{-t}-1+t}{t^p}$ is bounded on $(0,\infty)$. Thus there exists a constant $C_p>0$ such that $e^{-t}\leq 1-t+ C_p t^p$ for all $t\geq 0$. For constant $K>0$, set $t_K=(KpC_p)^{-(p-1)^{-1}}$ and $\beta_K=1-t_K(1-1/p)\in(0,1)$. We can deduce that
\begin{equation}\label{l2e1}
\phi_\xi(t)\leq \phi_\xi(t_K)\leq 1-t+C_p t_K^p \mathbb E_\xi \bar W^p\leq 1-t_K+KC_p t_K^p=\beta_K
\end{equation}
if $t\geq t_K$ and $\mathbb E_\xi \bar W^p\leq K$.

 Le $T$ be the shift operator such that  $T\xi=(\xi_1,\xi_2,\cdots)$ if $\xi=(\xi_0,\xi_1,\cdots)$.
Observe that $\bar W=\frac{1}{m_0}\sum_{i=0}^{X_0}\bar W(i)$, where  given the environment $\xi$, $\bar W(i)$ $(i=1,2,\cdots)$ are i.i.d. with the same distribution $\mathbb P_\xi(\bar W(i)\in\cdot)=\mathbb P_{T\xi}(\bar W\in\cdot)$. Thus
$$\phi_\xi(t)=\mathbb E_\xi \left[\phi_{T\xi}(\frac{t}{m_0})^{X_0}\right]\leq \phi_{T\xi}(\frac{t}{m_0})\left(p_1(\xi_0)+(1-p_1(\xi_0))\phi_{T\xi}(\frac{t}{m_0})\right).$$
By iteration, for any integer $n\geq 1$,
$$\phi_\xi(t)\leq \phi_{T^n\xi}(\frac{t}{\Pi_n})\prod_{j=0}^{n-1}\left(p_1(\xi_j)+(1-p_1(\xi_j))\phi_{T^n\xi}(\frac{t}{\Pi_n})\right).$$
By \eqref{l2e1},
\begin{eqnarray*}
\phi_\xi(t)\leq \phi_{T^n\xi}(\frac{t}{\Pi_n})\prod_{j=0}^{n-1}\left(p_1(\xi_j)+(1-p_1(\xi_j))\beta_K\right)+\phi_{T^n\xi}(\frac{t}{\Pi_n})\mathbf{1}_{\{\mathbb E_{T^n\xi}\bar W^p>K\}}+\mathbf{1}_{\{t<V\}},
\end{eqnarray*}
where $V=t_K\Pi_n$. Taking the expectations yields that
\begin{eqnarray}\label{l2e2}
\phi(t)\leq \mathbb E\left[\phi(\frac{t}{\Pi_n})\prod_{j=0}^{n-1}\left(p_1(\xi_j)+(1-p_1(\xi_j))\beta_K\right)\right]+\mathbb E \left[\phi_{T^n\xi}(\frac{t}{\Pi_n})\mathbf{1}_{\{\mathbb E_{T^n\xi}\bar W^p>K\}}\right]+\mathbb P(t<V).
\end{eqnarray}
Applying the  Burholder's inequality, we see that
$$\mathbb E_\xi \bar W^p\leq 1+
\sum_{k=0}^\infty \Pi_k^{-(p-1)}\eta_k,$$
where $\eta_k=\mathbb E_{T^k\xi}|\bar W_1-1|^p$. Take
$\epsilon\in(0,\min\{\varepsilon,1\})$.
Thus,  with the notation $\Pi_{k,l}=\prod_{i=k}^{l-1}m_i$,
\begin{eqnarray}\label{l2e3}
\mathbb E \left[\phi_{T^n\xi}(\frac{t}{\Pi_n})\mathbf{1}_{\{\mathbb E_{T^n\xi}\bar W^p>K\}}\right]&\leq&K^{-\epsilon}\mathbb E \left[\phi_{T^n\xi}(\frac{t}{\Pi_n})(\mathbb E_{T^n\xi}\bar W^p)^\epsilon\right]\nonumber\\
&\leq&K^{-\epsilon}\mathbb E \left[\phi_{T^n\xi}(\frac{t}{\Pi_n})\left(1+\sum_{k=0}^\infty \Pi_{n,n+k}^{-(p-1)}\eta_{n+k}\right)^\epsilon\right]\nonumber\\
&\leq&K^{-\epsilon}\mathbb E \left[\phi_{T^n\xi}(\frac{t}{\Pi_n})+\sum_{k=0}^\infty \phi_{T^n\xi}(\frac{t}{\Pi_n})\Pi_{n,n+k}^{-(p-1)\epsilon}\eta_{n+k}^\epsilon\right]\nonumber\\
&\leq& K^{-\epsilon}\mathbb E \left[\phi_{T^n\xi}(\frac{t}{\Pi_n})+\sum_{k=0}^\infty \phi_{T^{n+k+1}\xi}(\frac{t}{\Pi_{n+k+1}})\Pi_{n,n+k}^{-(p-1)\epsilon}\eta_{n+k}^\epsilon\right]\nonumber\\
&=& K^{-\epsilon}\left[\mathbb E \phi(\frac{t}{\Pi_n})+\sum_{k=0}^\infty \mathbb E\phi(\frac{t}{\Pi_{n+k+1}})\Pi_{n,n+k}^{-(p-1)\epsilon}\eta_{n+k}^\epsilon\right].
\end{eqnarray}
Combining \eqref{l2e2} with \eqref{l2e3}, we obtain
\begin{eqnarray}\label{l2e4}
\phi(t)&\leq& \mathbb E\left[\phi(\frac{t}{\Pi_n})\prod_{j=0}^{n-1}\left(p_1(\xi_j)+(1-p_1(\xi_j))\beta_K\right)\right]\nonumber\\
&&+K^{-\epsilon}\left[\mathbb E \phi(\frac{t}{\Pi_n})+\sum_{k=0}^\infty \mathbb E\phi(\frac{t}{\Pi_{n+k+1}})\Pi_{n,n+k}^{-(p-1)\epsilon}\eta_{n+k}^\epsilon\right]+\mathbb P(t<V).
\end{eqnarray}
We define a non-negative random variable $G$ as follows: for all bounded measurable functions $g$,
\begin{eqnarray*}
\mathbb E g(G)&=&\frac{1}{q}\left\{
\mathbb E\left[g(\frac{1}{\Pi_n})\prod_{j=0}^{n-1}\left(p_1(\xi_j)+(1-p_1(\xi_j))\beta_K\right)\right]\right.\\
&&\left.+K^{-\epsilon}\left[\mathbb E g(\frac{1}{\Pi_n})+\sum_{k=0}^\infty \mathbb Eg(\frac{1}{\Pi_{n+k+1}})\Pi_{n,n+k}^{-(p-1)\epsilon}\eta_{n+k}^\epsilon\right]\right\},
\end{eqnarray*}
where
\begin{eqnarray*}
q=\left[\mathbb P(X_0=1)+(1-\mathbb P(X_0=1))\beta_K\right]^n+K^{-\epsilon}\left[1+\sum_{k=0}^\infty (\mathbb Em_0^{-(p-1)\epsilon})^k\mathbb E\eta_{0}^\epsilon\right]<\infty
\end{eqnarray*}
 is the norming constant.
Now \eqref{l2e4} becomes $\phi(t)\leq q\mathbb E\phi(Gt)+\mathbb P(t<V)$. Since
$$q\overset{n\uparrow\infty}{\longrightarrow}K^{-\epsilon}\left[1+\sum_{k=0}^\infty (\mathbb Em_0^{-(p-1)\epsilon})^k\mathbb E\eta_{0}^\epsilon\right] \overset{K\uparrow\infty}{\longrightarrow}0,$$
we can take $n, K$ large enough such that $q<1$. As $\mathbb E(\log m_0)^a<\infty$, we have for $r\in(0,a)$,
$$\mathbb E|\log V|^r=\mathbb E|\log t_K+\log \Pi_n|^r\leq (n+1)^{\max\{r-1,0\}}(\mathbb E|\log t_K|^r+\mathbb E(\log m_0)^r)<\infty,$$
and
\begin{eqnarray*}
q\mathbb E|\log G|^r&=&
\mathbb E\left[|-\log \Pi_n|^r\prod_{j=0}^{n-1}\left(p_1(\xi_j)+(1-p_1(\xi_j))\beta_K\right)\right]\\
&&+K^{-\epsilon}\left[
\mathbb E |-\log \Pi_n|^r+\sum_{k=0}^\infty \mathbb E |-\log \Pi_{n+k+1}|^r\Pi_{n,n+k}^{-(p-1)\epsilon}\eta_{n+k}^\epsilon
\right]\\
&\leq& n^{\max\{r,1\}}\mathbb E(\log m_0)^r(1+K^{-\epsilon})\\
&&+K^{-\epsilon}\sum_{k=0}^\infty(n+k+1)^{\max\{r-1,0\}}\sum_{j=0}^{n+k}\mathbb E\left[(\log m_j)^r\Pi_{n,n+k}^{-(p-1)\epsilon}\eta_{n+k}^\epsilon\right]\\
&=& n^{\max\{r,1\}}\mathbb E(\log m_0)^r(1+K^{-\epsilon})
+K^{-\epsilon}\sum_{k=0}^\infty(n+k+1)^{\max\{r-1,0\}}\\
&&\left\{\sum_{j=0}^{n-1}\mathbb E(\log m_0)^r (\mathbb Em_0^{-(p-1)\epsilon})^k\mathbb E\eta_0^\epsilon
+\sum_{j=n}^{n+k-1}\mathbb E[(\log m_0)^rm_0^{-(p-1)\epsilon}](\mathbb Em_0^{-(p-1)\epsilon})^{k-1}\mathbb E\eta_0^\epsilon\right.\\
&&\left.+(\mathbb Em_0^{-(p-1)\epsilon})^k\mathbb E[(\log m_0)^r\eta_0^\epsilon]\right\}\\
&<&\infty.
\end{eqnarray*}
It follows from Lemma \ref{le0} that
$\phi(t)=O((\log t)^{-r})$.
\end{proof}

The following theorem concerns the existence of the moments $\sup_n\mathbb E|\log W_n|^r$ for $r>0$.
\begin{thm}\label{tt2}
Let $a>0$. If $\mathbb E(\frac{Y_0}{m_0})^\delta<\infty$, $\mathbb E(\log m_0)^{a}<\infty$ and $\mathbb E(\mathbb E_\xi (\frac{X_0}{m_0})^p)^\varepsilon<\infty$ for some $\delta>0$, $p>1$ and $\varepsilon>0$, then we have  for $r\in(0, a)$, $$\sup_n\mathbb E|\log W_n|^r<\infty\quad\text{and}\quad \mathbb E|\log W|^r<\infty.$$
\end{thm}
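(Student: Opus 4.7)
The plan is to split $|\log W_n|^r \leq C_r((\log^+ W_n)^r + (\log^- W_n)^r)$ (and similarly for $W$) and bound the two parts separately, deducing the statements for the limit $W$ from the uniform bounds for $W_n$ via Fatou's lemma applied to the a.s.\ convergence $W_n\to W$ and $\bar W_n\to \bar W$.

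For the negative part, I would use that in the natural coupling $Z_n\geq \bar Z_n$ (immigration only adds individuals), so $W_n\geq \bar W_n$ a.s.\ and $\log^- W_n\leq \log^- \bar W_n$. It therefore suffices to bound $\sup_n\mathbb{E}(\log^-\bar W_n)^r$. Lemma \ref{le1} gives $\phi(t)=\mathbb{E}e^{-t\bar W}=O((\log t)^{-r'})$ for any $r'\in(0,a)$; the same proof applies to $\phi_n(t)=\mathbb{E}e^{-t\bar W_n}$, because $\bar W_n$ obeys the identical recursion $\bar W_n = m_0^{-1}\sum_{i=1}^{X_0}\bar W_{n-1}^{(i)}$ and the Burkholder bound on $\mathbb{E}_\xi\bar W_n^p$ is dominated by the one on $\mathbb{E}_\xi\bar W^p$ (the series defining it is merely truncated), so the same threshold $K$ and contraction constant $q<1$ work uniformly in $n$. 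Choosing $r'\in(r,a)$ and applying Markov's inequality, $\mathbb{P}(\bar W_n<e^{-x})\leq e\phi_n(e^x)=O(x^{-r'})$ uniformly in $n$, and integration against $rx^{r-1}$ yields $\sup_n\mathbb{E}(\log^-\bar W_n)^r<\infty$.

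For the positive part, I would use $(\log^+ x)^r\leq C_{r,s}x^s$ to reduce to showing $\sup_n\mathbb{E}W_n^s<\infty$ for some $s>0$. Decomposing the population by ancestral origin, the initial individual contributes $\bar Z_n$, and the $Y_{k-1}$ immigrants entering at time $k$ each generate an independent BPRE of duration $n-k$ in the shifted environment $T^k\xi$, giving
\[
W_n=\bar W_n+\sum_{k=1}^{n}\frac{1}{\Pi_k}\sum_{j=1}^{Y_{k-1}}\bar W_{n-k}^{(k,j)},
\]
where, given the environment and $Y_{k-1}$, the $\bar W_{n-k}^{(k,j)}$ are i.i.d.\ with $\mathbb{E}_{T^k\xi}\bar W_{n-k}^{(k,j)}=1$. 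For $s\in(0,\min(\delta,1))$, subadditivity $(a+b)^s\leq a^s+b^s$ applied across $k$ together with Jensen's inequality for the concave $x\mapsto x^s$ yields
\[
\mathbb{E}W_n^s\leq \mathbb{E}\bar W_n^s+\sum_{k=1}^{n}\mathbb{E}\!\left[\frac{\mathbb{E}_\xi Y_{k-1}^s}{\Pi_k^s}\right]\leq 1+\mathbb{E}\bigl[(Y_0/m_0)^s\bigr]\sum_{k=1}^{\infty}(\mathbb{E}m_0^{-s})^{k-1}.
\]
The elementary bound $(Y_0/m_0)^s\leq 1+(Y_0/m_0)^\delta$ for $s\leq\delta$ makes the first factor finite, and since $m_0>1$ a.s.\ with $\left.\tfrac{\mathrm d}{\mathrm ds}\mathbb{E}m_0^{-s}\right|_{s=0}=-\mathbb{E}\log m_0<0$, one has $\mathbb{E}m_0^{-s}<1$ for $s>0$ small enough, so the geometric series converges. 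This establishes $\sup_n\mathbb{E}W_n^s<\infty$, hence $\sup_n\mathbb{E}(\log^+ W_n)^r<\infty$.

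The main obstacle will be verifying that the control in Lemma \ref{le1} passes through to the finite-$n$ quantities $\bar W_n$ uniformly in $n$; once that adaptation is made, the positive-part estimate is essentially bookkeeping via the ancestral decomposition, and $\mathbb{E}|\log W|^r<\infty$ follows by taking $\liminf$ in $n$ and invoking Fatou on both the $\log^-$ and $\log^+$ pieces.
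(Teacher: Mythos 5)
Your overall skeleton (split into $\log^+$/$\log^-$, use $W_n\geq\bar W_n$ to pass to the no-immigration process for the negative piece, Fatou for the limit) is the same as the paper's, but both halves are filled in differently, and the negative part has a genuine gap.

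For the positive part you rebuild $\sup_n\mathbb{E}W_n^s<\infty$ from scratch via the ancestral decomposition, whereas the paper simply cites \cite[Theorem 6.3]{wl17}. Your computation is correct (and pleasantly self-contained), though the remark about $\tfrac{d}{ds}\mathbb{E}m_0^{-s}\big|_{s=0}=-\mathbb{E}\log m_0$ is unnecessary and slightly off under the stated hypotheses (when $a<1$ one does not know $\mathbb{E}\log m_0<\infty$): it is enough to note that assumption \eqref{ea0} gives $m_0>1$ a.s., so $\mathbb{E}m_0^{-s}<1$ for every $s>0$ outright.

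The real problem is in the negative part, where you assert that ``the same proof applies'' to $\phi_n(t)=\mathbb{E}e^{-t\bar W_n}$ uniformly in $n$. The recursion you quote, $\bar W_n=m_0^{-1}\sum_{i=1}^{X_0}\bar W_{n-1}^{(i)}$, lowers the index: it gives $\phi_{n,\xi}(t)=\mathbb{E}_\xi\big[\phi_{n-1,T\xi}(t/m_0)^{X_0}\big]$, so iterating $m$ times relates $\phi_{N,\xi}$ to $\phi_{N-m,T^m\xi}$, not to $\phi_{N,\cdot}$. After taking annealed expectations one does \emph{not} land on the self-referential inequality $\phi_N(t)\leq q\,\mathbb{E}\phi_N(Gt)+\mathbb{P}(t<V)$ required by Lemma \ref{le0}; one gets a relation between $\phi_N$ and $\phi_{N-m}$. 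To salvage this you would have to pass to $\Psi(t)=\sup_N\phi_N(t)$ and separately handle the boundary terms where the iteration reaches $\phi_0(t)=e^{-t}$ (i.e.\ small $N$), which is extra work your sketch does not acknowledge. The paper avoids all of this: it introduces a bounded decreasing convex majorant $g$ of $(-\log x)^r\mathbf{1}_{\{x<1\}}$, uses that $\bar W_n$ is a nonnegative martingale so that by convexity (\cite[Lemma 2.1]{hl12}) $\sup_n\mathbb{E}g(\bar W_n)=\mathbb{E}g(\bar W)$, and is thereby reduced to a single Laplace-transform estimate on $\bar W$ itself — exactly the content of Lemma \ref{le1}. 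That convexity/submartingale step is the key idea your proposal misses; without it (or a carefully executed uniform-in-$n$ version of Lemma \ref{le1}), the bound $\sup_n\mathbb{E}(\log^-\bar W_n)^r<\infty$ is not established.
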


\begin{proof}%[Proof of Theorem \ref{tt2}]
Fix $r\in(0,a)$.
We just need to show that $\sup_n\mathbb E|\log W_n|^r<\infty$, which implies  $\mathbb E|\log W|^r<\infty$ by Fatou's lemma.
Without loss of generality,  we think that $\delta\in(0,1]$, otherwise we can use $\min\{\delta,1\}$ to replace $\delta$. Notice that
\begin{eqnarray*}
\mathbb E|\log W_n|^r=\mathbb E|\log W_n|^r\mathbf{1}_{\{W_n\geq 1\}}+\mathbb E|\log W_n|^r\mathbf{1}_{\{W_n< 1\}}
\leq C\mathbb E W_n^\delta+\mathbb E|\log W_n|^r\mathbf{1}_{\{W_n< 1\}}.
\end{eqnarray*}
Since $\mathbb E(\frac{Y_0}{m_0})^\delta<\infty$, by \cite[Theorem 6.3]{wl17}, we have $\sup_n\mathbb E W_n^\delta<\infty$. Then, set a function $g(x)$ as follows:
$$
g(x)=\left\{\begin{array}{ll}
(-\log x)^r & \text{if $0<x<x_r$;}\\
(-\log x_r)^r & \text{if $x\geq x_r$,}
\end{array}
\right.
$$
where $x_r=\min\{e^{r-1},1\}$. It can be seen that $g(x)$ is a decreasing and convex function on $(0,\infty)$ satisfying $g(x)\geq |\log x|^r\mathbf{1}_{\{x< 1\}}$ for $x>0$. Since $W_n\geq \bar W_n$, we have
\begin{equation}\label{p1e1}
\sup_n\mathbb E|\log W_n|^r\mathbf{1}_{\{W_n< 1\}}
\leq \sup_n\mathbb E g(W_n)\leq \sup_n\mathbb E g(\bar W_n)=\mathbb E g(\bar W),
\end{equation}
where we have used the monotonicity and convexity of $g(x)$ as well as \cite[lemma 2.1]{hl12}. Notice that
\begin{equation}\label{p1e2}
\mathbb E g(\bar W)\leq \mathbb E(-\log \bar W)^r\mathbf{1}_{\{\bar W< x_r\}}+(-\log x_r)^r.
\end{equation}
Combining \eqref{p1e1} and \eqref{p1e2}, we see that in order to obtain $\sup_n\mathbb E|\log W_n|^r\mathbf{1}_{\{W_n< 1\}}<\infty$, it remains to show that
\begin{equation}\label{ple31}
\mathbb E(-\log \bar W)^r\mathbf{1}_{\{\bar W< x_r\}}<\infty.
\end{equation}
We calculate that
\begin{eqnarray}\label{ple3}
\mathbb E(-\log \bar W)^r\mathbf{1}_{\{\bar W< x_r\}}
&\leq & r\int_{x_r^{-1}}^\infty t^{-1}(\log t)^{r-1}\mathbb P(\bar W<t^{-1})\mathrm{d}t\nonumber \\
&\leq & C \int_{x_r^{-1}}^\infty t^{-1}(\log t)^{r-1}\phi(t)\mathrm{d}t,
\end{eqnarray}
where $\phi(t)=\mathbb E e^{-t\bar W}$ is the Laplace transform of $\bar W$. By Lemma \ref{le1}, we have $\phi(t)=O((\log t)^{-r_1})$ for $r<r_1<a$.
Therefore, the integral in \eqref{ple3} is finite, which yields \eqref{ple31}.

\end{proof}

\section{Convergence rates of $\log W_n$}\label{s3}
In this section, we deduce the convergence rates of $\log W_n$ based the moment assumption that $\sup_n\mathbb E|\log W_n|^{r}<\infty$ for some  $r>0$.
We  will see that  $\log W_n$ can converge very fast, with a polynomial or  exponential rate,  under appropriate moment conditions.

%The followed lemma gives the bound of the harmonic moment $E Z_n^{-r}$ and $E(\log Z_n)^{-r}\mathbf{1}_{\{Z_n\geq 2\}}.$
\begin{lem}[\cite{L1}, Lemma 2.2]\label{l3l1}
Let $r>0$. Then $\mathbb E \bar Z_n^{-r}\leq (\mathbb E X_0^{-r})^n$, $\mathbb E(\log \bar Z_n)^{-r}\mathbf{1}_{\{Z_n\geq 2\}}\leq C n^{-r}$.
\end{lem}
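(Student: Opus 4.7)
The plan is to handle the two parts separately, with the second leaning on the first; both exploit the assumption \eqref{ea0} that $X_0\geq 1$ a.s.\ and $\mathbb P(X_0\geq 2)>0$.

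For the moment estimate $\mathbb E \bar Z_n^{-r}\leq (\mathbb E X_0^{-r})^n$, I would argue by induction on $n$, with the trivial base $\bar Z_0\equiv 1$. For the step, conditioning on the environment $\xi$ and on $\bar Z_n=k$, the variable $\bar Z_{n+1}=\sum_{i=1}^{k} X_{n,i}$ is a sum of $k$ i.i.d.\ (under $\mathbb P_\xi$) strictly positive random variables. The key elementary inequality
\begin{equation*}
\Big(\sum_{i=1}^k x_i\Big)^{-r}\ \leq\ \frac{1}{k^{r+1}}\sum_{i=1}^k x_i^{-r}\qquad (x_i>0,\ r>0),
\end{equation*}
which is the power-mean inequality $M_1\geq M_{-r}$ (equivalently, Jensen applied to the convex map $t\mapsto t^{-r}$), together with the i.i.d.\ structure, gives $\mathbb E[\bar Z_{n+1}^{-r}\mid \xi,\bar Z_n]\leq \bar Z_n^{-r}\,\mathbb E_\xi X_n^{-r}$. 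Since $\mathbb E[\bar Z_n^{-r}\mid \xi]$ depends only on $(\xi_0,\dots,\xi_{n-1})$ while $\mathbb E_\xi X_n^{-r}$ depends only on $\xi_n$, taking annealed expectation and invoking the i.i.d.\ environment produces the multiplicative recursion $\mathbb E \bar Z_{n+1}^{-r}\leq \mathbb E \bar Z_n^{-r}\cdot \mathbb E X_0^{-r}$, which closes the induction.

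For the second claim, the crucial preliminary is $q_0:=\mathbb E X_0^{-r}<1$: indeed $X_0^{-r}\leq 1$ everywhere, and $\mathbb P(X_0\geq 2)>0$ (by \eqref{ea0}) forces $\mathbb E X_0^{-r}\leq \mathbb P(X_0=1)+2^{-r}\mathbb P(X_0\geq 2)<1$. Pick $\lambda>0$ small enough that $\rho:=q_0e^{r\lambda}<1$. Markov combined with Part 1 then yields
\begin{equation*}
\mathbb P(\bar Z_n<e^{\lambda n})\ \leq\ e^{r\lambda n}\mathbb E \bar Z_n^{-r}\ \leq\ \rho^n,
\end{equation*}
so the lower tail of $\bar Z_n$ decays exponentially. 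I would then split the expectation at the threshold $\bar Z_n=e^{\lambda n}$: on $\{\bar Z_n\geq e^{\lambda n}\}$, $(\log \bar Z_n)^{-r}\leq (\lambda n)^{-r}=O(n^{-r})$; on the complementary event $\{2\leq \bar Z_n<e^{\lambda n}\}$, $(\log \bar Z_n)^{-r}\leq (\log 2)^{-r}$ is multiplied by a probability at most $\rho^n=o(n^{-r})$. Summing the two pieces gives the bound.

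There is no serious obstacle; the argument is a standard packaging of a quenched Jensen-type bound for negative moments with a Markov lower-tail estimate for $\bar Z_n$ and a dichotomy at the expected linear growth rate of $\log \bar Z_n$. The only non-routine observation is the strict inequality $\mathbb E X_0^{-r}<1$, which supplies the positive rate $\lambda$ used in the dichotomy and is immediate from \eqref{ea0}.
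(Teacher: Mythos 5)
Your proof is correct and follows essentially the same route as the cited Lemma 2.2 of \cite{L1}: a conditional Jensen (power-mean) bound yields the multiplicative recursion for the negative moment, the observation $\mathbb E X_0^{-r}<1$ coming from assumption \eqref{ea0} supplies an exponential lower-tail bound for $\bar Z_n$ via Markov, and the threshold split at $\bar Z_n=e^{\lambda n}$ converts that into the $O(n^{-r})$ estimate for the logarithmic negative moment. (You also implicitly and correctly read the indicator in the statement as $\mathbf{1}_{\{\bar Z_n\geq 2\}}$; the $Z_n$ in the paper's lemma is a typo, since $\bar Z_n$ is the process without immigration, which is the object of the estimate, and on $\{Z_n\geq 2,\bar Z_n=1\}$ the quantity $(\log\bar Z_n)^{-r}$ would be infinite.)
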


\begin{thm}\label{tt3}
Let $\gamma>0$ and $q>0$. Assume that $\sup_n\mathbb E|\log W_n|^{r}<\infty$ for some  $r>q$, and
$\mathbb E(\frac{Y_0}{m_0})^\delta<\infty$ for some $\delta>0$.
\begin{itemize}
\item[(a)]If $\mathbb E\frac{X_0}{m_0}(\log^+ X_0)^\lambda<\infty$ for some $\lambda>\frac{r\gamma}{(r-q)\min\{\delta,q,1\}}$, then
$$\mathbb E|\log W_{n+1}-\log W_n|^q=O(n^{-\gamma}).$$
\item[(b)] If $\mathbb E(\mathbb E_\xi (\frac{X_0}{m_0})^p)^\delta<\infty$ for some $p>1$, then there exists a constant $\rho>1$ such that
$$\mathbb E|\log W_{n+1}-\log W_n|^q=O(\rho^{-n}).$$
\end{itemize}
\end{thm}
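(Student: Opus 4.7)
Set $R_n := W_{n+1}/W_n$, so that $\log W_{n+1}-\log W_n=\log R_n$. Dividing the BPIRE recursion $Z_{n+1}=Y_n+\sum_{i=1}^{Z_n}X_{n,i}$ by $\Pi_{n+1}=\Pi_n m_n$ gives the decomposition
$$R_n-1=\frac{Y_n}{Z_n m_n}+\frac{1}{Z_n m_n}\sum_{i=1}^{Z_n}(X_{n,i}-m_n)=:I_n+J_n,$$
an immigration piece $I_n$ and a quenched-centered fluctuation $J_n$. The plan is to reduce to decay estimates on $\mathbb E|R_n-1|^\varepsilon$ with $\varepsilon:=\min\{\delta,q,1\}$, handle $I_n$ via Lemma~\ref{l3l1}, and treat $J_n$ through a Jensen trick that upgrades an $L^1$ bound to an $L^\varepsilon$ bound.

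\emph{Reduction.} Using $|\log x|\le 2|x-1|$ on $[1/2,3/2]$ and splitting by $\{|R_n-1|\le 1/2\}$: on this event $|\log R_n|^q\le 2^q|R_n-1|^\varepsilon$ (since $|R_n-1|\le 1$ and $\varepsilon\le q$); on the complement, H\"older's inequality with exponents $r/q,r/(r-q)$, the hypothesis $\sup_n\mathbb E|\log W_n|^r<\infty$ (which via the triangle inequality bounds $\mathbb E|\log R_n|^r$), and Markov's $\mathbb P(|R_n-1|>1/2)\le 2^\varepsilon\mathbb E|R_n-1|^\varepsilon$ yield
$$\mathbb E|\log R_n|^q\le 2^q\mathbb E|R_n-1|^\varepsilon+C\bigl(\mathbb E|R_n-1|^\varepsilon\bigr)^{(r-q)/r}.$$
The hypothesis $\lambda\varepsilon(r-q)/r>\gamma$ is precisely what converts $\mathbb E|R_n-1|^\varepsilon=O(n^{-\lambda\varepsilon})$ into the target $O(n^{-\gamma})$, while an exponential bound transfers directly to case (b).

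\emph{Immigration term.} Quenched independence of $Y_n$ and $Z_n$, the bound $Z_n\ge\bar Z_n$, Lemma~\ref{l3l1}, and the annealed independence of $\sigma(\xi_n)$ from $\sigma(\xi_0,\dots,\xi_{n-1})$ produce
$$\mathbb E|I_n|^\varepsilon\le\mathbb E(Y_0/m_0)^\varepsilon\,(\mathbb E X_0^{-\varepsilon})^n,$$
which decays exponentially since $\mathbb P(X_0\ge 2)>0$ forces $\mathbb E X_0^{-\varepsilon}<1$.

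\emph{Fluctuation term.} In case (b), the quenched von Bahr--Esseen inequality (WLOG $p\in(1,2]$) gives $\mathbb E_\xi|J_n|^p\le CZ_n^{1-p}\mathbb E_\xi(X_0/m_0)^p$; raising to $\varepsilon/p$, integrating against the hypothesis $\mathbb E(\mathbb E_\xi(X_0/m_0)^p)^\delta<\infty$, and invoking Lemma~\ref{l3l1} yield $\mathbb E|J_n|^\varepsilon=O(\rho^{-n})$. In case (a), the crucial observation is that $\mathbb E_{\xi,Z_n}|J_n|\le 2$ by the conditional triangle inequality, so $\mathbb E|J_n|<\infty$ and by Jensen ($\varepsilon\le 1$),
$$\mathbb E|J_n|^\varepsilon\le(\mathbb E|J_n|)^\varepsilon;$$
it therefore suffices to show $\mathbb E|J_n|=O(n^{-\lambda})$. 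Truncating the offspring at a level $K>e$ and splitting $J_n=J_n^{\mathrm{tr}}+J_n^{\mathrm{tl}}$, the bounded part satisfies $\mathbb E_{\xi,Z_n}|J_n^{\mathrm{tr}}|\le(K/(Z_n m_n))^{1/2}$ by Cauchy--Schwarz on its quenched variance, while the tail part is bounded by $2(\log K)^{-\lambda}\Lambda_n$, with $\Lambda_n:=\mathbb E_\xi(X_n/m_n)(\log^+X_n)^\lambda$, via the Chebyshev-type inequality $\mathbf{1}_{\{X_n>K\}}\le(\log^+X_n/\log K)^\lambda$. Annealed integration, Lemma~\ref{l3l1} (yielding $\mathbb E(Z_n m_n)^{-1/2}\le\rho_0^{-n}$ for some $\rho_0>1$), and $\mathbb E\Lambda_0<\infty$ then give
$$\mathbb E|J_n|\le K^{1/2}\rho_0^{-n}+C(\log K)^{-\lambda},$$
and the choice $K=\rho_0^n$ produces $\mathbb E|J_n|=O(n^{-\lambda})$, hence $\mathbb E|J_n|^\varepsilon=O(n^{-\lambda\varepsilon})$. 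The main technical obstacle that the Jensen trick sidesteps is the exponential factor $Z_n^{1-\varepsilon}$ that would otherwise arise from applying subadditivity of $x\mapsto x^\varepsilon$ directly at the quenched level in the tail part, and whose annealed expectation need not be finite under the paper's moment hypotheses; working at the cheaper $L^1$ scale makes the truncation argument both sharp and robust.
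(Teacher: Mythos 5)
Your proof is correct, and for part (a) it takes a genuinely different route from the paper's. You share the same starting decomposition $W_{n+1}/W_n-1=I_n+J_n$ (the paper writes $\Delta_n=\bar\Delta_n+Y_n/(Z_nm_n)$), the same H\"older reduction of the $q$-th moment of $\log(W_{n+1}/W_n)$ to a small moment of $W_{n+1}/W_n-1$, the same treatment of the immigration piece via $\mathbb E\bar Z_n^{-\varepsilon}\le(\mathbb E X_0^{-\varepsilon})^n$, and for part (b) the same Marcinkiewicz--Zygmund/Burkholder bound followed by Jensen, so (b) is essentially identical to the paper. Where you diverge is the fluctuation piece $J_n$ in part (a). The paper truncates the centered offspring $B_{n,i}=X_{n,i}/m_n-1$ at the \emph{random} level $Z_n$, splits into three terms $\Lambda_{n,1},\Lambda_{n,2},\Lambda_{n,3}$, and controls each by exploiting the monotonicity of $x^{1-\delta}(\log x)^{s\gamma}$ and similar functions together with the bound $\mathbb E(\log\bar Z_n)^{-s\gamma}\mathbf 1_{\{\bar Z_n\ge2\}}\le Cn^{-s\gamma}$ from Lemma~\ref{l3l1}. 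You instead observe that $\mathbb E_{\xi,Z_n}|J_n|\le2$ uniformly, apply Jensen to drop to the $L^1$ scale, truncate at a \emph{deterministic} level, bound the body by Cauchy--Schwarz and the tail by the log-Chebyshev inequality, and optimize $K=\rho_0^n$. This shortcuts the paper's three-term case analysis and avoids the random-truncation bookkeeping, using only the first half of Lemma~\ref{l3l1}. Both approaches land on $\mathbb E|J_n|^\varepsilon=O(n^{-\lambda\varepsilon})$ and hence on the stated $O(n^{-\gamma})$ rate under the hypothesis $\lambda>\frac{r\gamma}{(r-q)\min\{\delta,q,1\}}$; your version is shorter and makes the role of the log-moment condition on $X_0/m_0$ transparent, while the paper's version is closer to the template in Gao's BPRE argument. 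One stylistic note: the paper phrases the reduction through $|\log(1+x)|\le C|x|^{\delta/q}$ on $[-M,\infty)$ after WLOG shrinking $\delta$, whereas you use $|\log x|\le2|x-1|$ on $[1/2,3/2]$ and then $|R_n-1|^q\le|R_n-1|^\varepsilon$; these are equivalent in effect.
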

\begin{proof}
Without loss of generality, we  can think that $\delta\leq \min\{q,1\}$, otherwise we use $\min\{\delta,q,1\}$ to replace $\delta$.
Set $\Delta_n=\frac{W_{n+1}}{W_n}-1$. Then $\log W_{n+1}-\log W_n=\log (\Delta_n+1)$. Notice that for  $\epsilon\in(0,1]$,  there exists a $M\in(0,1)$ such that $|\log(x+1)|\leq C|x|^\epsilon$ on $[-M,\infty)$.  Take $\epsilon=\delta/q$. Then, by H\'older's inequality,
\begin{eqnarray*}
\mathbb E|\log W_{n+1}-\log W_n|^q &=&
\mathbb E |\log (\Delta_n+1)|^q\mathbf{1}_{\{\Delta_n\geq -M\}}+\mathbb E |\log (\Delta_n+1)|^q\mathbf{1}_{\{\Delta_n<-M\}}\\
&\leq& C\mathbb E|\Delta_n|^\delta+(\mathbb E|\log (\Delta_n+1)|^r)^{q/r}\mathbb P(|\Delta_n|>M)^{1/s}\\
&\leq&C\mathbb E|\Delta_n|^\delta+C(\mathbb E|\Delta_n|^\delta)^{1/s},
\end{eqnarray*}
where $s=\frac{r}{r-q}$.
Let
$$B_{n,i}=\frac{X_{n,i}}{m_n}-1\quad\text{and}\quad\bar{\Delta}_n=\frac{1}{Z_n}\sum_{i=1}^{Z_n }B_{n,i}.$$
We will write $B_0=B_{0,1}$ for brevity later.
Notice that
$$\Delta_n=\frac{W_{n+1}}{W_n}-1=\bar{\Delta}_n+\frac{Y_n}{Z_n m_n}.$$
Since $\mathbb E(\frac{Y_0}{m_0})^\delta<\infty$ and $\mathbb E(\frac{X_0}{m_0})^\delta\leq 1$,
by Lemma \ref{l3l1}, we have
\begin{eqnarray*}
\mathbb E|\Delta_n|^\delta&\leq &C\left(\mathbb E|\bar \Delta_n|^\delta+\mathbb E(\frac{Y_0}{m_0})^\delta \mathbb E Z_n^{-\delta}\right)\nonumber\\
&\leq& C\left[\mathbb E|\bar \Delta_n|^\delta+(\mathbb EX_0^{-\delta})^n\right].
\end{eqnarray*}
The assumption \eqref{ea0} implies that $\mathbb EX_0^{-\delta}<1$. Therefore, we just need to prove that
\begin{equation*}
\mathbb E|\bar \Delta_n|^\delta=O(a_n)
\end{equation*}
with $a_n=n^{-s\gamma}$ for the assertion (a), and $a_n=\rho^{-n}$ for the assertion (b), respectively.

We first work on the assertion (a). Notice that
\begin{eqnarray*}
\mathbb E|\bar \Delta_n|^\delta&=& \mathbb E|\bar \Delta_n|^\delta\mathbf{1}_{\{Z_n=1\}}+\mathbb E|\bar \Delta_n|^\delta\mathbf{1}_{\{Z_n\geq 2\}}\nonumber\\
&\leq&\mathbb E\left|\frac{X_0}{m_0}-1\right|\mathbb P(Z_n=1)+\mathbb E|\bar \Delta_n|^\delta\mathbf{1}_{\{Z_n\geq 2\}}
\end{eqnarray*}
and $\mathbb P(Z_n=1)=(\mathbb E[h_0(\xi_0)p_1(\xi_0)])^n
\leq\mathbb P(X_0=1)^n$.
It remains to show that
\begin{equation}\label{t3e1}
\mathbb E|\bar \Delta_n|^\delta\mathbf{1}_{\{Z_n\geq 2\}}\leq C n^{-s\gamma}.
\end{equation}
We use a technique of truncation. Let $I_n(x)=\mathbf{1}_{\{|x|\leq Z_n\}}$ be the truncation function and $I_n^c(x)=1-I_n(x)$. Put
$\tilde B_{n,i}=B_{n,i}I_n(B_{n,i})$. Then, we write
\begin{eqnarray*}
\bar \Delta_n&=&\frac{1}{Z_n}\sum_{i=1}^{Z_n}(B_{n,i}-
\tilde B_{n,i})+\frac{1}{Z_n}\sum_{i=1}^{Z_n}(\tilde B_{n,i}-
\mathbb E_n\tilde B_{n,i})+\frac{1}{Z_n}\sum_{i=1}^{Z_n}\mathbb E_n
\tilde B_{n,i}\\
&=&:\Lambda_{n,1}+\Lambda_{n,2}+\Lambda_{n,3},
\end{eqnarray*}
where the notation $\mathbb E_n(\cdot)=\mathbb E[\cdot|Z_n]$ is the conditional expectation respect to $Z_n$.
In order to obtain \eqref{t3e1}, we need to show that $\mathbb E|\Lambda_{n,i}|^\delta
\mathbf{1}_{\{Z_n\geq 2\}}\leq C n^{-s\gamma}$, $i=1,2,3$.

First, we consider $\Lambda_{n,3}$. Since $\mathbb E_n B_{n,i}=0$, we can write $\mathbb E_n\tilde B_{n,i}=\mathbb E_n(\tilde B_{n,i}-B_{n,i})=\mathbb E_nB_{n,i}I_n^c(B_{n,i})$. Notice that the function $(\log x)^{s\gamma/\delta}$ is increasing on $[1,\infty)$. By Lemma \ref{l3l1}, we have
\begin{eqnarray*}
&&\mathbb E |\Lambda_{n,3}|^\delta
\mathbf{1}_{\{Z_n\geq 2\}}\\
&\leq&\mathbb E\mathbf{1}_{\{Z_n\geq 2\}} Z_n^{-\delta}\left|\sum_{i=1}^{Z_n}\mathbb EB_{n,i}I_n^c(B_{n,i})\right|^\delta\\
&\leq&\mathbb E\mathbf{1}_{\{Z_n\geq 2\}} Z_n^{-\delta}\left(\sum_{i=1}^{Z_n}\mathbb E|B_{n,i}|\frac{(\log^+|B_{n,i}|)^{s\gamma/\delta}}{(\log Z_n)^{s\gamma/\delta}}\right)^\delta\\
&\leq&\mathbb E\mathbf{1}_{\{Z_n\geq 2\}}(\log Z_n)^{-s\gamma}\left(\mathbb E |B_0|(\log^+|B_0|)^{s\gamma/\delta}\right)^\delta\\
&\leq& Cn^{-s\gamma},
\end{eqnarray*}
since $\mathbb E\frac{X_0}{m_0}(\log^+X_0)^\lambda<\infty$ and $s\gamma/\delta\leq \frac{r\gamma}{(r-q)\min\{\delta,q,1\}}<\lambda$.

Second, we deal with $\Lambda_{n,1}$. Noticing that the function $x^{1-\delta}(\log x)^{s\gamma}$ is increasing on $[1,\infty)$, again by Lemma \ref{l3l1} , we get
 \begin{eqnarray*}
&&\mathbb E |\Lambda_{n,1}|^\delta
\mathbf{1}_{\{Z_n\geq 2\}}\\
&\leq&\mathbb E\mathbf{1}_{\{Z_n\geq 2\}}Z_n^{-\delta}\sum_{i=1}^{Z_n}|B_{n,i}|^\delta\frac{|B_{n,i}|^{1-\delta}(\log^+|B_{n,i}|)^{s\gamma}}{Z_n^{1-\delta}(\log Z_n)^{s\gamma}}\\&\leq&\mathbb E\mathbf{1}_{\{Z_n\geq 2\}}(\log Z_n)^{-s\gamma}\mathbb E |B_0|(\log^+|B_0|)^{s\gamma}\\
&\leq& Cn^{-s\gamma},
\end{eqnarray*}
since  $s\gamma<\lambda$.

Finally, we work on $\Lambda_{n,2}$. Take $r_1>1$. Using Jensen's inequality and Burkholder's inequality, we get
\begin{eqnarray}\label{t3e2}
\mathbb E |\Lambda_{n,2}|^\delta
\mathbf{1}_{\{Z_n\geq 2\}}
&\leq&\mathbb E\mathbf{1}_{\{Z_n\geq 2\}}\left(\mathbb E_n\left|\frac{1}{Z_n}\sum_{i=1}^{Z_n}(\tilde B_{n,i}-
\mathbb E_n\tilde B_{n,i})\right|^{r_1}\right)^{\delta/r_1}\nonumber\\
&\leq&C\mathbb E\mathbf{1}_{\{Z_n\geq 2\}}Z_n^{-\delta}\left(\sum_{i=1}^{Z_n}\mathbb E_n|\tilde B_{n,i}-
\mathbb E_n\tilde B_{n,i}|^{r_1}\right)^{\delta/r_1}\nonumber\\
&\leq&C\mathbb E\mathbf{1}_{\{Z_n\geq 2\}}Z_n^{-\delta}\left(\sum_{i=1}^{Z_n}\mathbb E_n|B_{n,i}|^{r_1}I_n(B_{n,i})\right)^{\delta/r_1}.
\end{eqnarray}
Notice that the function $x^{r_1-1}(\log x)^{-s\gamma r_1/\delta}$ is positive and increasing on $[c,\infty)$ for some constant $c>0$ large enough. With this $c$, considering \eqref{t3e1},  we can obtain
\begin{eqnarray}\label{t3e3}
\mathbb E |\Lambda_{n,2}|^\delta
\mathbf{1}_{\{Z_n\geq 2\}}
&\leq& C\left\{\mathbb E\mathbf{1}_{\{Z_n\geq 2\}}Z_n^{-\delta}\left(\sum_{i=1}^{Z_n}\mathbb E_n|B_{n,i}|^{r_1}I_n(B_{n,i})\mathbf{1}_{\{|B_{n,i}|\geq c\}}\right)^{\delta/r_1}\right.\nonumber\\
&&+\left.\mathbb E\mathbf{1}_{\{Z_n\geq 2\}}Z_n^{-\delta}\left(\sum_{i=1}^{Z_n}\mathbb E_n|B_{n,i}|^{r_1}I_n(B_{n,i})\mathbf{1}_{\{|B_{n,i}|<c\}}\right)^{\delta/r_1}\right\}\nonumber\\
&=&:C\left(\Lambda_{n,2}^{(1)}+\Lambda_{n,2}^{(2)}\right).
\end{eqnarray}
It is not difficult to see that
\begin{eqnarray}\label{t3e4}
\Lambda_{n,2}^{(2)}\leq C \mathbb E\mathbf{1}_{\{Z_n\geq 2\}}Z_n^{-(1- {1}/{r_1})\delta}\leq C\left(\mathbb E X_0^{-(1-1/r_1)\delta}\right)^n.
\end{eqnarray}
For $\Lambda_{n,2}^{(1)}$, by the increasing monotonicity of the function $x^{r_1-1}(\log x)^{-s\gamma r_1/\delta}$ on $[c,\infty)$, we have
\begin{eqnarray}\label{t3e5}
\Lambda_{n,2}^{(1)}&\leq &C \mathbb E\mathbf{1}_{\{Z_n\geq 2\}}Z_n^{-\delta}\left(\sum_{i=1}^{Z_n}\mathbb E_n|B_{n,i}|^{r_1}\frac{Z_n^{r_1-1}(\log^+|B_{n,i}|)^{s\gamma r_1/\delta}}{|B_{n,i}|^{r_1-1}(\log Z_n)^{s\gamma r_1/\delta}}\right)^{\delta/r_1}\nonumber\\
&\leq& C\mathbb E\mathbf{1}_{\{Z_n\geq 2\}}(\log Z_n)^{-s\gamma}\left(\mathbb E |B_0|(\log^+|B_0|)^{s\gamma r_1/\delta}\right)^{\delta/r_1}\nonumber\\
&\leq& Cn^{-s\gamma}
\end{eqnarray}
if we take $r_1>1$ small enough such that $s\gamma r_1/\delta<\lambda$. Combining \eqref{t3e3} with  \eqref{t3e4} and  \eqref{t3e5} yields that  $\mathbb E |\Lambda_{n,2}|^\delta\mathbf{1}_{\{Z_n\geq 2\}}\leq  Cn^{-s\gamma}$ if $\lambda>s\gamma /\delta$.

Now we work on the assertion (b). We think that $p\in(1,2]$. Denote by $\mathbb E_{\xi,n}(\cdot)$ the conditional expectation when  $\xi$ and $Z_n$ are given. By Burkholder's inequality,
\begin{eqnarray*}
\mathbb E_{\xi,n}|\bar \Delta_n|^\delta&\leq& \left(\mathbb E_{\xi,n}|\bar \Delta_n|^p\right)^{\delta/p}\\
&\leq& C Z_n^{-\delta}\left(\mathbb E_{\xi,n}\sum_{i=1}^{Z_n}|B_{n,i}|^p\right)^{\delta/p}\\
&=& C Z_n^{-\delta(1-1/p)}\left(\mathbb E_{T^n\xi}|B_0|^p\right)^{\delta/p}.
\end{eqnarray*}
Taking the expectation gives
\begin{equation}\label{ea1}
\mathbb E |\bar \Delta_n|^\delta\leq C \mathbb EZ_n^{-\delta(1-1/p)} \mathbb E \left(\mathbb E_{\xi}|B_0|^p\right)^{\delta/p}\leq C \left(\mathbb E X_0^{-\delta(1-1/p)}\right)^n.
\end{equation}
We can take $\rho=\left(\mathbb E X_0^{-\delta(1-1/p)}\right)^{-1}>1$. The proof is complete.
\end{proof}

\begin{re}Theorem \ref{tt3} is established on the  moment assumption $\sup_n\mathbb E|\log W_n|^{r}<\infty$ to describe the $L^q$ ($q>0$) convergence rates of $\log W_n$. Theorem \ref{tt3}(a) reveals a
 polynomial rate, while   Theorem \ref{tt3}(b) shows an exponential rate. Obviously, the exponential rate is also polynomial, thus both conditions in (a) and (b) of Theorem \ref{tt3} will lead to the polynomial rate. However, in order to ensure $\sup_n\mathbb E|\log W_n|^{r}<\infty$,  by Theorem \ref{tt2}, we need the moment condition $\mathbb E(\mathbb E_\xi (\frac{X_0}{m_0})^p)^\varepsilon<\infty$ for some $ p>1$ and $\varepsilon>0$ which is also the condition of  Theorem \ref{tt3}(b). It implies that if we use Theorem \ref{tt2} to derive $\sup_n\mathbb E|\log W_n|^{r}<\infty$,  we also deduce that $\log W_n$ converges in $L^q$ with an exponential rate at the same time by Theorem \ref{tt3}(b). Thus,   Theorem \ref{tt3}(a)  has no sense in this paper and we will not use it later. If one can find a condition weaker than that of Theorem \ref{tt3}(b), then we can use Theorem \ref{tt3}(a) to obtain another available condition to achieve the  polynomial  convergence rate of $\log W_n$.
\end{re}

Combining Theorems \ref{tt2}  and \ref{tt3}, we deduce the following corollary about the $L^1$- convergence of $\log W_n$.

\begin{co}\label{co1}
If $\mathbb E(\frac{Y_0}{m_0})^\delta<\infty$, $\mathbb E(\log m_0)^{r}<\infty$ and $\mathbb E(\mathbb E_\xi (\frac{X_0}{m_0})^p)^\delta<\infty$ for some $r, p>1$ and $\delta>0$,    then
$$\lim_{n\rightarrow}\mathbb E\log W_{n}=\mathbb E\log W.$$
\end{co}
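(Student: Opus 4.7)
The plan is to combine Theorem \ref{tt2} and Theorem \ref{tt3}(b) so that $\{\log W_n\}$ forms a Cauchy sequence in $L^1$, and then identify its $L^1$ limit via an almost sure argument.

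First I would fix some $r' \in (1,r)$; this is possible precisely because the hypothesis imposes $r > 1$. The moment assumptions of the corollary coincide with those required by Theorem \ref{tt2} (taking $a = r$ and $\varepsilon = \delta$ there), so that theorem yields $\sup_n \mathbb{E}|\log W_n|^{r'} < \infty$ and $\mathbb{E}|\log W|^{r'} < \infty$. The finiteness of $\mathbb{E}|\log W|^{r'}$ forces $W > 0$ almost surely, so together with the almost sure convergence $W_n \to W$ recalled in Section~\ref{LTS1} we conclude $\log W_n \to \log W$ almost surely.

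Next I would invoke Theorem \ref{tt3}(b) with $q = 1$. The condition $\sup_n \mathbb{E}|\log W_n|^{r'} < \infty$ with $r' > 1 = q$ is supplied by the previous step, while $\mathbb{E}(Y_0/m_0)^\delta < \infty$ and $\mathbb{E}(\mathbb{E}_\xi(X_0/m_0)^p)^\delta < \infty$ are hypotheses of the corollary. The theorem then provides some $\rho > 1$ with
$$\mathbb{E}|\log W_{n+1}-\log W_n| = O(\rho^{-n}),$$
and in particular $\sum_n \mathbb{E}|\log W_{n+1}-\log W_n| < \infty$. This geometric summability exhibits $\{\log W_n\}$ as an $L^1$-Cauchy sequence, so it converges in $L^1$ to some integrable random variable.

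Since a subsequence of any $L^1$-convergent sequence converges almost surely to the same limit, the $L^1$ limit must coincide with the almost sure limit $\log W$ identified above. Therefore $\log W_n \to \log W$ in $L^1$, which immediately yields $\mathbb{E}\log W_n \to \mathbb{E}\log W$. The one subtle point I foresee is making the parameters match: one must choose $r' > 1$ in Theorem \ref{tt2} so that it can legitimately play the role of the exponent $r$ in Theorem \ref{tt3}(b) with $q = 1$, and this is exactly why the corollary strengthens the hypothesis on $\mathbb{E}(\log m_0)^r$ from $r > 0$ (as in Theorem \ref{tt2}) to $r > 1$.
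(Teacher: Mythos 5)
Your proposal follows exactly the same route as the paper's (terse) proof: invoke Theorem \ref{tt2} with $a=r$, $\varepsilon=\delta$ to obtain $\sup_n\mathbb{E}|\log W_n|^{r'}<\infty$ for some $r'\in(1,r)$, then apply Theorem \ref{tt3}(b) with $q=1$. You have simply spelled out the details the paper leaves implicit (summability of $\mathbb{E}|\log W_{n+1}-\log W_n|$ gives $L^1$-Cauchy, and the $L^1$ limit is identified with the a.s.\ limit $\log W$ via $W>0$ a.s.), and these steps are all correct.
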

\begin{proof}By Theorem  \ref{tt2},  we have $\sup_n\mathbb E|\log W_n|^{r_1}<\infty$ for $r_1\in(1, r)$. Then applying Theorem \ref{tt3}(b) to $q=1$.
\end{proof}

With the help of  Theorems \ref{tt2}  and \ref{tt3}(b), by following the proof of  Wang and Liu \cite{T4},  we can weaken the condition in the Berry-Esseen bound on $\log Z_n$ established by Wang and Liu \cite{T4}. Compared with \cite {T4}, we see that the moment restriction on the immigration $\mathbb E(\frac{Y_0}{m_0})^p<\infty$ for some $p>1$ can be relaxed to   $p>0$, and the moment restriction on the reproduction $\mathbb E(\frac{X_0}{m_0})^p<\infty$ can be weaken to $\mathbb E(\mathbb E_\xi (\frac{X_0}{m_0})^p)^\delta<\infty$ with some $\delta>0$.

\begin{thm}[Berry-Esseen bound]\label{ttb}
Assume that $\mathbb E(\log m_0)^{2+\epsilon}$ for some $\epsilon\in(0,1]$. Set $\mu=\mathbb E\log m_0$ and $\sigma^2=\mathbb E(\log m_0-\mu)^2$.
If $\sigma>0$, $\mathbb E(\frac{Y_0}{m_0})^\delta<\infty$ and $\mathbb E(\mathbb E_\xi (\frac{X_0}{m_0})^p)^\delta<\infty$ for some $p>1$  and   $\delta>0$,
 then
$$\sup_{x\in\mathbb R}\left|\mathbb P\left(\frac{\log Z_n -n\mu}{\sqrt{n}\sigma }\leq x\right)-\Phi(x)\right|\leq Cn^{-\epsilon/2}.$$
\end{thm}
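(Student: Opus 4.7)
The plan is to follow the proof of Wang and Liu \cite{T4}, noting that moment information on $\log W_n$ enters their argument only through (i) a uniform bound $\sup_n\mathbb E|\log W_n|^r<\infty$ for some $r>2$, and (ii) an estimate of the rate at which $\log W_n$ approaches its limit $\log W$. In \cite{T4} these ingredients rest on $\mathbb E(Y_0/m_0)^p<\infty$ and $\mathbb E(X_0/m_0)^p<\infty$ for some $p>1$; here they are supplied instead, under the weaker hypotheses stated, by Theorem \ref{tt2} (yielding $\sup_n\mathbb E|\log W_n|^r<\infty$ for every $r\in(0,2+\epsilon)$) and Theorem \ref{tt3}(b) (yielding the exponential $L^q$-convergence $\mathbb E|\log W_{n+1}-\log W_n|^q=O(\rho^{-n})$ for any $q>0$). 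Verifying that these two theorems supply every moment- or rate-type estimate needed in \cite{T4} is what the proof amounts to.

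Concretely, I would start from the identity $\log Z_n - n\mu = S_n + \log W_n$, where $S_n:=\sum_{k=0}^{n-1}(\log m_k-\mu)$ is a centred i.i.d.\ random walk whose $(2+\epsilon)$-th moment is finite. The classical Esseen theorem applied to $S_n$ then gives
$$
\sup_x\bigl|\mathbb P\!\bigl(S_n/(\sqrt n\sigma)\le x\bigr)-\Phi(x)\bigr| \le C n^{-\epsilon/2}.
$$
One bridges from $S_n$ to $\log Z_n-n\mu$ via the two-sided shift inequality: for any $\alpha_n>0$,
$$
\mathbb P\!\bigl(\tfrac{\log Z_n-n\mu}{\sqrt n\sigma}\le x\bigr) \le \mathbb P\!\bigl(\tfrac{S_n}{\sqrt n\sigma}\le x+\tfrac{\alpha_n}{\sqrt n\sigma}\bigr)+\mathbb P(|\log W_n|>\alpha_n),
$$
and its reverse. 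Combining this with $|\Phi(x+h)-\Phi(x)|\le C|h|$ and the Markov bound $\mathbb P(|\log W_n|>\alpha_n)\le \alpha_n^{-r}\sup_n\mathbb E|\log W_n|^r$ (finite for $r<2+\epsilon$ by Theorem \ref{tt2}), one selects $\alpha_n$ so that all three error contributions are $O(n^{-\epsilon/2})$.

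The step I expect to be most delicate is the balancing in the last inequality when $\epsilon$ is close to $1$: the crude combination above requires $r\ge \epsilon/(1-\epsilon)$, whereas Theorem \ref{tt2} only provides $r<2+\epsilon$. Following \cite{T4}, the remedy is to sharpen the Markov step via a characteristic-function based Esseen smoothing applied to $\phi_n(t)=\mathbb E\bigl[e^{itS_n/(\sqrt n\sigma)}\,\mathbb E_\xi e^{it\log W_n/(\sqrt n\sigma)}\bigr]$: the inner quenched characteristic function is close to $1$ by an amount that, once integrated against the random-walk factor, gives enough cancellation to bypass the loss in the crude Markov bound, and the exponential $L^q$ rate of Theorem \ref{tt3}(b) is used to replace $\log W_n$ by $\log W$ (respectively $\mathbb E_\xi|\log W_n|^q$ by $\mathbb E_\xi|\log W|^q$) up to a negligible error whenever needed. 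Apart from this careful bookkeeping, the argument is a direct transcription of \cite{T4}, with every moment or convergence-rate input for $\log W_n$ drawn from our Theorems \ref{tt2} and \ref{tt3}(b).
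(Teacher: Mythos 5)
The paper gives no detailed proof of Theorem~\ref{ttb}; it simply asserts that one repeats the argument of Wang--Liu~\cite{T4} with the moment inputs replaced by Theorems~\ref{tt2} and~\ref{tt3}(b). You correctly identify those two theorems as the moment ingredients. However, the concrete mechanism you sketch is not the one used, and the remedy you propose for the obstruction you yourself notice does not actually close the gap.

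The primary obstruction is real and you diagnose it correctly: writing $\log Z_n-n\mu=S_n+\log W_n$ and shifting by a deterministic $\alpha_n$ forces $\alpha_n/\sqrt n\lesssim n^{-\epsilon/2}$ (so that $|\Phi(x\pm\alpha_n/\sqrt n\sigma)-\Phi(x)|\lesssim n^{-\epsilon/2}$) together with $\alpha_n^{-r}\lesssim n^{-\epsilon/2}$, which needs $r\ge\epsilon/(1-\epsilon)$; with only $r<2+\epsilon$ from Theorem~\ref{tt2} this fails once $\epsilon>\sqrt3-1$. Your proposed fix via Esseen smoothing of $\phi_n(t)=\mathbb E[e^{itS_n/(\sqrt n\sigma)}\,\mathbb E_\xi e^{it\log W_n/(\sqrt n\sigma)}]$ does not obviously help: the natural bound $|\mathbb E_\xi e^{it\log W_n/(\sqrt n\sigma)}-1|\le |t|\,\mathbb E_\xi|\log W_n|/(\sqrt n\sigma)$ contributes, after integrating $dt/|t|$ over $|t|\le T$, a term of order $T/\sqrt n$; with the truncation level $T$ needed to extract $O(n^{-\epsilon/2})$ for the i.i.d.\ part, this again fails for $\epsilon$ near $1$. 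The phrase ``gives enough cancellation'' is left unsubstantiated, and I do not see how the exponential rate of Theorem~\ref{tt3}(b) enters this smoothing argument at all: Theorem~\ref{tt3}(b) controls the \emph{increments} $\log W_{n+1}-\log W_n$, not the size of $\log W_n$.

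The device you are missing is the intermediate-time split that the paper itself uses in the proof of Theorem~\ref{tt1} (and that Wang--Liu use): fix $\beta\in(0,1/2)$, $k_n=[n^\beta]$, and write
\begin{equation*}
\frac{\log Z_n-n\mu}{\sqrt n\sigma}=\frac{\log Z_{k_n}-k_n\mu}{\sqrt n\sigma}+\frac{(S_n-S_{k_n})-(n-k_n)\mu}{\sqrt n\sigma}+D_n,\qquad D_n=\frac{\log W_n-\log W_{k_n}}{\sqrt n\sigma}.
\end{equation*}
It is $D_n$, not $\log W_n$, that one shifts by a small $\alpha_n=n^{-\alpha}$. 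Theorem~\ref{tt3}(b) gives $\mathbb E|D_n|^q\le C n^{-q/2}\sum_{k\ge k_n}\rho^{-k}=O(n^{-q/2}\rho^{-n^\beta})$, so $\mathbb P(|D_n|>\alpha_n)=O(n^{q(\alpha-1/2)}\rho^{-n^\beta})$ decays super-polynomially for any fixed $\alpha$, and the shift $\alpha_n$ can be taken as small as we like. In the remaining probability, $S_n-S_{k_n}$ is independent of $\log Z_{k_n}$; condition on $\log Z_{k_n}$, apply the i.i.d.\ Berry--Esseen bound of Lemma~\ref{s4l1}(a) to $S_n-S_{k_n}$, and note that the resulting error is uniform. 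The moment bound $\sup_n\mathbb E|\log W_n|^r<\infty$ from Theorem~\ref{tt2} is then used only to control the deterministic drift $\mathbb E\log W_{k_n}$ (via Corollary~\ref{co1}), not to estimate a tail probability of $\log W_n$. This is why the hypotheses here (which do not give any tail control on $\log W_n$ stronger than a polynomial moment) suffice: the argument never asks $\log W_n$ itself to concentrate, only its increments. You should replace the smoothing sketch with this $k_n$-split.
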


\section{Proof of Theorem \ref{tt1}}\label{s4}
In this section, we go to the proof of Theorem \ref{tt1}. %Our proof starts from the decomposition (\ref{eo}). We apply the former theorems to prove the expression (\ref{t4ee2}) and the followed lemma (\ref{s4l1}) to evidence the equation (\ref{t4ee2}).
Notice that
$$\log Z_n=S_n+\log W_n,$$
where $S_n=\log \Pi_n$ is the  partial sum of the i.i.d. sequence $(\log m_n)_{n\geq 0}$. According to the classical knowledge of probability theory, it holds for  $S_n$ the Berry-Essen bound and the exact convergence rate in central limit theorem. Therefore, one can expect that $\log Z_n$ possesses the same  asymptotic properties as $S_n$ when $\log W_n$ converges fast enough.

For a random walk $(S_n)$, Lemma \ref{s4l1} below gives the corresponding Berry-Esseen  bound   in the part (a) and the exact convergence rate  in the part (b).
\begin{lem}[\cite{L2}]\label{s4l1}
Let $X$ be a random variable and $X_n$ be independence copies of $X$. Set $S_n=\sum_{k=1}^nX_k$, $\mu=\mathbb EX$, $\sigma^2=\mathbb E(X-\mu)^2$ and $\mu_3=\mathbb E(X-\mu)^3$. Assume that $\sigma>0$. Then,
\begin{itemize}
\item[(a)]\emph{(Berry-Esseen bound)} If $\mathbb E|X|^{2+\epsilon}<\infty$ for some $\epsilon\in(0,1]$,
then
$$\sup_{x\in\mathbb R}\left|\mathbb P\left(\frac{S_n -n\mu}{\sqrt{n}\sigma }\leq x\right)-\Phi(x)\right|\leq Cn^{-\epsilon/2};$$
\item[(b)]\emph{(Convergence rate)}
If $X$ is non-lattice and $\mathbb E|X|^3<\infty$, then
$$\lim_{n\rightarrow\infty}\sqrt{n}\sup_{x\in\mathbb R}\left\{\left|\mathbb P\left(\frac{S_n -n\mu}{\sqrt{n}\sigma }\leq x\right)-\Phi(x)-\frac{1}{\sqrt{n}}Q(x)\right|\right\}=0,$$
\end{itemize}
where $\Phi(x)=\frac{1}{\sqrt 2\pi}\int_{-\infty}^x e^{-t^2/2}\mathrm{d}t$ is   the standard normal distribution function, $\varphi(x)=\frac{1}{\sqrt 2\pi} e^{-x^2/2}$ is the density function of the standard normal distribution, and $Q(x)=\frac{1}{6\sigma^3}\mu_3(1-x^2)\varphi(x)$.
\end{lem}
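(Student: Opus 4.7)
The plan starts from the decomposition $\log Z_n = S_n + \log W_n$, where $S_n = \log \Pi_n = \sum_{k=0}^{n-1}\log m_k$ is an i.i.d.\ random walk. Setting $T_n = (S_n - n\mu)/(\sqrt n\,\sigma)$ and $U_n = \log W_n/(\sqrt n\,\sigma)$, the left-hand side of \eqref{te3} equals $\sqrt n\bigl[\mathbb P(T_n+U_n\le x)-\Phi(x)\bigr]$. I split this as
\[
\sqrt n\bigl[\mathbb P(T_n\le x)-\Phi(x)-Q(x)/\sqrt n\bigr] + Q(x) + \sqrt n\, D_n(x),
\]
where $D_n(x) := \mathbb P(T_n+U_n\le x)-\mathbb P(T_n\le x)$. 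Lemma \ref{s4l1}(b) makes the first bracket $o(1)$, so the main task reduces to showing
\[
\sqrt n\, D_n(x)\longrightarrow -\frac{1}{\sigma}\varphi(x)\,\mathbb E\log W.
\]

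The guiding heuristic, which would be rigorous if $T_n$ and $U_n$ were independent, combines the uniform Edgeworth expansion $F_{T_n}(y)=\Phi(y)+Q(y)/\sqrt n+o(1/\sqrt n)$ with the Taylor expansion $\Phi(x-U_n)-\Phi(x) = -\varphi(x)U_n+O(U_n^2)$ to yield
\[
D_n(x)\approx \mathbb E\bigl[F_{T_n}(x-U_n)-F_{T_n}(x)\bigr] \approx -\varphi(x)\,\mathbb E U_n = -\frac{\varphi(x)}{\sqrt n\,\sigma}\,\mathbb E\log W_n,
\]
with second-order error $O(\mathbb EU_n^2)=O(1/n)$ controlled by $\sup_n\mathbb E(\log W_n)^2<\infty$ from Theorem \ref{tt2}. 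Multiplying by $\sqrt n$ and invoking Corollary \ref{co1} (that is, $\mathbb E\log W_n\to\mathbb E\log W$) then gives the claimed limit.

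The main obstacle is that $T_n$ and $U_n$ are \emph{not} independent: both are measurable with respect to the initial environment $\xi_0,\dots,\xi_{n-1}$, and $U_n$ depends additionally on the reproduction and immigration variables up to time $n$. My plan to bridge this gap rests on two ingredients. First, Theorem \ref{tt3}(b) supplies $\mathbb E|\log W-\log W_n|=O(\rho^{-n})$, so $U_n$ can be replaced by $\tilde U:=\log W/(\sqrt n\,\sigma)$ in $\mathbb P(T_n+U_n\le x)$ at a cost bounded by
\[
\mathbb P(|U_n-\tilde U|>\varepsilon) + \sup_y\mathbb P(y<T_n\le y+\varepsilon),
\]
which with $\varepsilon=n^{-1}$ becomes $o(1/\sqrt n)$ (using the uniform bound on the density-like quantity from Lemma \ref{s4l1}(b) or the Berry-Esseen bound in Theorem \ref{ttb}). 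Second, the branching-with-immigration recursion decomposes $W$ into $\Pi_n^{-1}\sum_{i=1}^{Z_n}\bar W^{(n,i)}$ (encoding the descendants of the time-$n$ population) plus a contribution from immigrants arriving after time $n$; all of these post-time-$n$ terms depend only on the shifted environment $T^n\xi$ and fresh reproduction/immigration variables, so the remainder $\log W-\log W_n$ is conditionally independent of $T_n$ given $Z_n$ and $T^n\xi$. This restores enough independence to legitimize the Taylor expansion at the random argument $x-\tilde U$.

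The hardest step is executing the Edgeworth-at-a-random-argument estimate simultaneously with this conditional-independence manoeuvre and keeping track of the various error terms so that they add up to $o(1/\sqrt n)$; the essential inputs are the uniformity in Lemma \ref{s4l1}(b), the exponential $L^1$-rate in Theorem \ref{tt3}(b), and the $L^r$-boundedness of $\log W_n$ from Theorem \ref{tt2}.
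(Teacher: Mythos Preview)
Your proposal does not address the stated lemma at all. Lemma~\ref{s4l1} is a classical result on i.i.d.\ sums (the Berry--Esseen bound and the first-order Edgeworth expansion), cited from Esseen~\cite{L2}; the paper does not prove it and merely invokes it as a known input. What you have written is instead a proof sketch for Theorem~\ref{tt1}, the main result of the paper, whose conclusion is equation~\eqref{te3}. You even apply Lemma~\ref{s4l1}(b) as an ingredient, so you are using the very statement you were asked to prove.

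If I read your sketch as an attempt at Theorem~\ref{tt1}, there is still a genuine gap in the dependence-breaking step. Replacing $U_n=\log W_n/(\sqrt n\,\sigma)$ by $\tilde U=\log W/(\sqrt n\,\sigma)$ does not decouple anything: $\log W$ depends on the \emph{entire} environment sequence $(\xi_k)_{k\ge 0}$, in particular on $\xi_0,\dots,\xi_{n-1}$, exactly the variables that determine $T_n$. Your subsequent conditional-independence claim --- that $\log W-\log W_n$ is conditionally independent of $T_n$ given $Z_n$ and $T^n\xi$ --- is also not correct as stated, since $T_n$ is a deterministic function of $\xi_0,\dots,\xi_{n-1}$ and is not measurable with respect to $\sigma(Z_n,T^n\xi)$; conditioning on that $\sigma$-field does not render $T_n$ a constant, so you cannot apply the Edgeworth expansion of $F_{T_n}$ at the random point $x-\tilde U$ as if the two were independent. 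The paper's proof of Theorem~\ref{tt1} sidesteps this by a blocking argument: it picks $k_n=[n^\beta]$ with $\beta\in(0,1/2)$, writes $\log Z_n=\log Z_{k_n}+(S_n-S_{k_n})+(\log W_n-\log W_{k_n})$, and exploits the genuine independence of $S_n-S_{k_n}$ (a function of $\xi_{k_n},\dots,\xi_{n-1}$) from $\log Z_{k_n}$ (a function of $\xi_0,\dots,\xi_{k_n-1}$ and the reproduction/immigration up to time $k_n$). The Edgeworth expansion is then applied to $S_n-S_{k_n}$ conditionally on $\log Z_{k_n}=y$, and the remainder $\log W_n-\log W_{k_n}$ is handled separately via Theorem~\ref{tt3}(b). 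This time-splitting device is the key idea your sketch is missing.
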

Now we give the proof of Theorem \ref{tt1}.
\begin{proof}[Proof of Theorem \ref{tt1}]
Set $\alpha_n=n^{-\alpha}$ and $k_n=[n^\beta]$, where $\alpha>1/2$ and $\beta\in(0,1/2)$ are constants to be determined later. Let
$$D_n=\frac{\log W_n-\log W_{k_n}}{\sqrt{n}\sigma}\quad\text{and}\quad S_n=\log \Pi_n=\sum_{k=0}^{n-1}\log m_k.$$
Observe that
\begin{equation}\label{eo}
\mathbb P\left(\frac{\log Z_n -n\mu}{\sqrt{n}\sigma }\leq x\right)\left\{
\begin{array}{l}
\leq \mathbb P\left(\frac{\log Z_{k_n} -k_n\mu}{\sqrt{n}\sigma }+\frac{S_n-S_{k_n} -(n-k_n)\mu}{\sqrt{n}\sigma }\leq x+\alpha_n\right)+\mathbb P(|D_n|>\alpha_n)\\
\geq \mathbb P\left(\frac{\log Z_{k_n} -k_n\mu}{\sqrt{n}\sigma }+\frac{S_n-S_{k_n} -(n-k_n)\mu}{\sqrt{n}\sigma }\leq x-\alpha_n\right)-\mathbb P(|D_n|>\alpha_n)
\end{array}
\right.
\end{equation}
It suffices to prove that
\begin{equation}\label{t4ee1}
\sqrt{n}\mathbb P(|D_n|>\alpha_n)\rightarrow 0
\end{equation}
and
\begin{eqnarray}\label{t4ee2}
&&\mathbb P\left(\frac{\log Z_{k_n} -k_n\mu}{\sqrt{n}\sigma }+\frac{S_n-S_{k_n} -(n-k_n)\mu}{\sqrt{n}\sigma }\leq x\pm \alpha_n\right)\nonumber\\&=&\Phi(x)-\frac{1}{\sigma\sqrt{n}}\varphi(x)\mathbb E\log W+\frac{1}{\sqrt{n}}Q(x)+\frac{1}{\sqrt{n}}o(1).
\end{eqnarray}

We first prove \eqref{t4ee1}. By Theorem  \ref{tt2},  we have $\sup_n\mathbb E|\log W_n|^{r_1}<\infty$ for $r_1\in(1, r)$.
By Markov's inequality and applying Theorem  \ref{tt3}(b),
\begin{eqnarray*}
\sqrt{n}\mathbb P(|D_n|>\alpha_n)&\leq&\sqrt{n}\alpha_n^{-q}{\mathbb E|D_n|^q}\nonumber\\
&=&\sigma^{-1}n^{\alpha q+\frac{1}{2}(1-q)}\mathbb E|\log W_n-\log W_{k_n}|^q\nonumber\\
&\leq&\sigma^{-1}n^{\alpha q+\frac{1}{2}(1-q)}\sum_{k=k_n}^{n-1}\mathbb E|\log W_{k+1}-\log W_{k}|^q\nonumber\\
&\leq&Cn^{\alpha q+\frac{1}{2}(1-q)}\sum_{k=k_n}^{n-1}\rho^{-k}\nonumber\\
\nonumber\\
&\leq& Cn^{\alpha q+\frac{1}{2}(1-q)}\rho^{-k_n}\overset{n\rightarrow\infty}\longrightarrow 0.
\end{eqnarray*}
Now let us prove \eqref{t4ee2}. We just consider the case that $x+\alpha_n$. The case that $x-\alpha_n$ can be proved with $-\alpha_n$ in place of $\alpha_n$. Denote $F_n(x)=\mathbb P(\frac{S_n-n\mu}{\sqrt{n}\sigma}\leq x)$. Decompose
\begin{eqnarray*}
&&\mathbb P\left(\frac{\log Z_{k_n} -k_n\mu}{\sqrt{n}\sigma }+\frac{S_n-S_{k_n} -(n-k_n)\mu}{\sqrt{n}\sigma }\leq x\pm \alpha_n\right)\nonumber\\
&=&\int \mathbb P\left(\left.\frac{y -k_n\mu}{\sqrt{n}\sigma }+\frac{S_n-S_{k_n} -(n-k_n)\mu}{\sqrt{n}\sigma }\leq x\pm \alpha_n\right|\log Z_{k_n}=y\right)\mathbb P(\log Z_{k_n}\leq\mathrm{d}y )\nonumber\\
&=&\int F_{n-k_n}(y_n)\mathbb P(\log Z_{k_n}\leq\mathrm{d}y)\nonumber\\
&=&\int \left[ F_{n-k_n}(y_n)-\Phi(y_n)-\frac{1}{\sqrt{n-k_n}}Q(y_n)\right]\mathbb P(\log Z_{k_n}\leq\mathrm{d}y)\nonumber\\&&+\int \Phi(y_n) \mathbb P(\log Z_{k_n}\leq\mathrm{d}y)+\frac{1}{\sqrt{n-k_n}}\int  Q(y_n) \mathbb P(\log Z_{k_n}\leq\mathrm{d}y)
\nonumber\\
&=&: I_{n,1}+I_{n,2}+I_{n,3},
\end{eqnarray*}
where $y_n=\frac{\sqrt{n}}{\sqrt{n-k_n}}(x+\alpha_n)-\frac{y-k_n\mu}{\sqrt{n-k_n}\sigma}$. It follows that
\begin{eqnarray*}
&&\sqrt{n}\left[\mathbb P\left(\frac{\log Z_{k_n} -k_n\mu}{\sqrt{n}\sigma }+\frac{S_n-S_{k_n} -(n-k_n)\mu}{\sqrt{n}\sigma }\leq x\pm \alpha_n\right)-\Phi(x)\right]\nonumber\\
&=&\sqrt{n}I_{n,1}+\sqrt{n}\left(I_{n,2}-\Phi(x)\right)+\sqrt{n}I_{n,3}.
\end{eqnarray*}
We shall prove that the following three assertions:
\begin{eqnarray}
&&\sqrt{n}I_{n,1}=o(1);\label{et401}\\
&&\sqrt{n}(I_{n,2}-\Phi(x))=-\frac{1}{\sigma}\varphi(x)\mathbb E\log W+o(1);\label{et402}\\
&&\sqrt{n}I_{n,3}=Q(x)+o(1).\label{et403}
\end{eqnarray}

For $I_{n,1}$, by the Berry-Esseen bound for i.i.d sequence (see Lemma \ref{s4l1}(a)) and the boundedness of the function $Q(x)$, we have
$$ \sqrt{n-k_n}\sup_{x\in\mathbb R}\left|F_{n-k_n}(x)-\Phi(x)-\frac{1}{\sqrt{n-k_n}}Q(x)\right|\leq C.$$
Applying Lemma \ref{s4l1}(b) and the dominated convergence theorem, we get
$$\sqrt{n}|I_{n,1}|\leq \frac{\sqrt{n}}{\sqrt{n-k_n}}\int \sup_{x\in\mathbb R}\left|F_{n-k_n}(x)-\Phi(x)-\frac{1}{\sqrt{n-k_n}}Q(x)\right|\mathbb P(\log Z_{k_n}\leq\mathrm{d}y)\overset{n\rightarrow\infty}\longrightarrow 0,$$
which implies \eqref {et401}.

 Denote
$$U_n=\frac{\log Z_n -n\mu}{\sqrt{n}\sigma}\quad \text{and}\quad x_n=\frac{\sqrt{n}}{\sqrt{n-k_n}}(x+\alpha_n).$$
 By the central limit theorem for $\log Z_n$ (see \cite[Theorem 7.1]{wl17}), $U_n$ converges to $U\sim\mathcal{N}(0,1)$ in law. As $\mathbb E(\log m_0)^r<\infty$, we can calculate that $\sup_n\mathbb E|U_n|^r<\infty$. Thus, for $0<s<r$ the sequence $(U_n^s)$ is uniformly integrable. Consequently (noticing that $r>2$), we have
\begin{equation}\label{et4e3}
\lim_{n\rightarrow\infty} E|U_n|=E|U|=\sqrt{\frac{2}{\pi}},\qquad
\lim_{n\rightarrow\infty} EU_n^2=EU^2=1.
\end{equation}
By Taylor's expansion,
\begin{eqnarray}\label{et4e5}
&&\sqrt{n}(I_{n,2}-\Phi(x))\nonumber\\&=&\sqrt{n}\varphi(x)\mathbb E\left[x_n-x-\frac{\sqrt{k_n}}{\sqrt{n-k_n}}U_{k_n}\right]+\frac{1}{2}\sqrt{n}\mathbb E\left[\varphi'(\zeta_{n,x})\left(x_n-x-\frac{\sqrt{k_n}}{\sqrt{n-k_n}}U_{k_n}\right)^2\right],\end{eqnarray}
where $\zeta_{n,x}$ is a random  variable between $x$ and $x_n-\frac{\sqrt{k_n}}{\sqrt{n-k_n}}U_{k_n}$.
We calculate that
\begin{equation}\label{et4e4}
x_n-x=O(n^{\max\{\beta-1, -\alpha\}}).
\end{equation}
Noticing that $\mathbb EU_n=\frac{1}{\sqrt{n}\sigma}\mathbb E\log W_n$, by \eqref{et4e4} and Corollary \ref{co1}, we deduce that
\begin{eqnarray}\label{et4e6}
&&\sqrt{n}\varphi(x)\mathbb E\left[x_n-x-\frac{\sqrt{k_n}}{\sqrt{n-k_n}}U_{k_n}\right]\nonumber\\&=&\varphi(x)\left[\sqrt{n}(x_n-x)-\frac{\sqrt{n}}{\sigma\sqrt{n-k_n}}\mathbb E\log W_{k_n}\right]
\overset{n\rightarrow\infty}\longrightarrow -\frac{1}{\sigma}\varphi(x)\mathbb E\log W.
\end{eqnarray}
Since $\sup_{x\in\mathbb R}|\varphi'(x)|\leq C$, by using \eqref{et4e4} and \eqref{et4e3}, we obtain
\begin{eqnarray}\label{et4e7}
&&\frac{1}{2}\sqrt{n}\mathbb E\left|\varphi'(\zeta_{n,x})\left(x_n-x-\frac{\sqrt{k_n}}{\sqrt{n-k_n}}U_{k_n}\right)^2\right|\nonumber\\&\leq&C\left[\sqrt{n}(x_n-x)^2+\sqrt{n}\frac{k_n}{n-k_n}\mathbb EU_{k_n}^2\right]
\overset{n\rightarrow\infty}\longrightarrow 0.
\end{eqnarray}
Combining \eqref{et4e5} with \eqref{et4e6} and \eqref{et4e7} yields \eqref{et402}.

Finally, we consider $I_{n,3}$, and prove \eqref{et403}. By Taylor's expansion,
$$\sqrt{n}I_{n,3}=\frac{\sqrt{n}}{\sqrt{n-k_n}}Q(x)+\frac{\sqrt{n}}{\sqrt{n-k_n}}\mathbb E\left[Q'(\tilde \zeta_{n,x})(x_n-x-\frac{\sqrt{k_n}}{\sqrt{n-k_n}}U_{k_n})\right],
$$
where $\tilde\zeta_{n,x}$ is a random  variable between $x$ and $x_n-\frac{\sqrt{k_n}}{\sqrt{n-k_n}}U_{k_n}$. The proof will be finished if we prove that
$$\mathbb E\left[Q'(\tilde \zeta_{n,x})(x_n-x-\frac{\sqrt{k_n}}{\sqrt{n-k_n}}U_{k_n})\right]=o(1).$$
In fact, since $\sup_{x\in\mathbb R}|Q'(x)|\leq C$, by \eqref{et4e4} and \eqref{et4e3},
\begin{eqnarray*}\label{et4e8}
\mathbb E\left|Q'(\tilde\zeta_{n,x})\left(x_n-x-\frac{\sqrt{k_n}}{\sqrt{n-k_n}}U_{k_n}\right)\right| \leq  C\left(|x_n-x|+\frac{\sqrt{k_n}}{\sqrt{n-k_n}}\mathbb E|U_{k_n}|\right)
\overset{n\rightarrow\infty}\longrightarrow 0,
\end{eqnarray*}
which completes the proof.
\end{proof}


\begin{thebibliography}{99}
\bibitem{B1}V.I. Afanasyev, C. B$\ddot{\textmd{o}}$inghoff, G.Kersting , V.A. Vatutin, Limit theorems for weakly subcritical branching processes in random environment, J.Theoret.Probab. 25 (2012), no.3, 703-732.

\bibitem{B2}V.I. Afanasyev, C. B$\ddot{\textmd{o}}$inghoff, G.Kersting, V.A. Vatutin, Conditional limit theorems for intermediately subcritical branching processes in random environment,  Ann.Inst.Henri Poincar$\acute{\textmd{e}}$ Probab. 50 (2014), no.2, 602-627.

\bibitem{C2}K.B. Athreya, S. Karlin, On branching processes in random environments, I.Ann.Math.Stat. 42 (1971), 1499-1520.

\bibitem{A2}V.Bansaye, Cell contamination and branching processes with immigration, Adv in Appl Probab. 41 (2009), 1059-1081.

\bibitem{D2}E. Bauernschubert, Recurrence and transience of critical branching processes in random environment with immigration and an application to excited random walks, Adv. Appl. Probab. 46 (2014), no.3, 683-703.
\bibitem{z}A. Dembo, O. Zeitouni, Large deviations Techniques and Applications. Springer, New York, 1998.

\bibitem{D1}E. Dyakonova, D. Li, V. Vatutin, M. Zhang, Branching processes in a random environment with immigration stopped at zero, J. Appl. Probab. 57 (2020), no.1, 237-249.

\bibitem{L2}C.G. Esseen, Fourier analysis of distribution functions. a mathematical study of the laplace-gaussian law, Acta Mathematica. 77 (1945), 1-125.
\bibitem{glm17}I. Grama, Q. Liu, E. Miqueu, Berry-Esseen bound and Cram\'er's large deviation expansion for a supercritical branching process in a random environment, Stoch. Proc. Appl. 127 (2017), 1255-1281.

\bibitem{GLM22+}I. Grama, Q. Liu, E. Miqueu, Asymptotic of the distribution and harmonic moments for a supercritical branching process in a random environmen. Preprint.

\bibitem{L1}Z. Gao, Exact convergence rate in the central limit theorem for a branching process  in a random environment,  Statistics$\&$Probability Letters. 178 (2021), 109-194.

\bibitem{hl12}C. Huang and Q. Liu, Moments, moderate  and large deviations for a branching process in a random environment, Stoch. Proc. Appl. 122 (2012), 522-545.
\bibitem{huang14}C. Huang and  Q. Liu,  Convergence in $L^p$ and its exponential rate for a branching process in a random environment, Electro. J. Probab. 19 (2014), no.104, 1-22.

\bibitem{A1}H. Kesten, M.V. Kozlov, F. Spitzer, A limit law for random walk in a random environment, Compos Math. 30 (1975), 145--168.

%\bibitem{GLM17} I. Grama,  Q.  Liu, E. Miqueu,   Harmonic moments and large deviations for a supercritical branching process in a random environment. Electro. J. Probab. 22 (2017), no.99, 1-23.
\bibitem{C1}W.L. Smith, W. Wilkinson, On branching processes in random environments, Ann.Math.Stat. 40 (1969),814-827.
\bibitem{B3}V.A. Vatutin, X. Zheng, Subcritical branching processes in a random environment without the Cramer condition, Stoch. Proc. Appl. 122 (2012), no.7, 2594-2609.

%\bibitem{hww20}C. Huang, C. Wang, X. Wang, Moments for supercritical branching processes with immigration in random environments.

\bibitem{wl17}Y. Wang and Q. Liu, Limit theorems for a supercritical branching process with immigration in a random environment, Science China Mathematics  60 (2017), 2481-2502.
\bibitem{T4}Y. Wang, Q. Liu, Berry-Esseen's bound for a supercritical branching process with immigration in a random environment, Science China Mathematics 51 (2021), no.5, 751-762.




\end{thebibliography}
\end{document}